\documentclass[11pt]{amsart}
\usepackage{cmap}
\usepackage[cp1251]{inputenc}
\usepackage[english]{babel}
\usepackage{amsmath}
\usepackage{amssymb}
\usepackage{amsfonts}
\usepackage{tikz}
\usepackage[shortlabels]{enumitem}

 %Here the author places classificators of the paper according to Russian classification system
 %Here the author places  classificators according to the AMS classification list.
\setcounter{page}{1}

\newtheorem{lemma}{Lemma}

\newtheorem*{theorem*}{Theorem}

\newtheorem{corollary}{Corollary}
\newtheorem{proposition}{Proposition}

\newcommand{\gen}[1]{\langle #1 \rangle}
\newcommand{\disjoint}{\dot{\cup}}

\title[About chromatic uniqueness]{About chromatic uniqueness of some complete tripartite
    graphs}
\author{{P. A. Gein}}
\address{P. A. Gein
\newline\hphantom{iii} Institute of Natural Sciences and Mathematics,
\newline\hphantom{iii} Ural Federal University,
\newline\hphantom{iii} Lenina, 51,
\newline\hphantom{iii} 62083, Ekaterinburg, Russia}%
\email{pavel.gein@gmail.com}%

\thanks{This paper is an English version of the paper \cite{Gein2}}

\begin{document}

\begin{abstract}
Let $P(G, x)$ be the chromatic polynomial of a graph $G$. A graph $G$ is called \textit{chromatically unique}
if for any graph $H,\, P(G, x) = P(H, x)$ implies that $G$ and $H$ are isomorphic.
In this paper we show that full tripartite graph
$K(n_1, n_2, n_3)$ is chromatically unique
if $n_1 \geq n_2 \geq n_2 \geq n_3 \geq 2, n_1 - n_3 \leq 5$
and  $n_1 + n_2 + n_3 \not \equiv  2 \mod{3}$.
\end{abstract}

\maketitle

\section{Introduction}
In this paper all graphs are considered to be simple, that is they do not contain loops and multiple edges. Terminology is used with accordance to \cite{ABR}.

A \textit{(proper) coloring of a graph $G$ in $x$ colors} is a map $\phi$
from the set of all vertices of the graph $G$ to the set of numbers $\{1, 2, \ldots x\}$,
such as $\phi(u) \neq \phi(v)$ holds for any two adjacent vertices $u$ and $v$. A graph is called
\textit{$x$-colorable} if there exists its
coloring in $x$ colors. Denote the number of all colorings of the graph $G$ in $x$ colors as $P(G, x)$.
It is well known (see, for example, \cite{ABR}), that the function $P(G, x)$ is a polynomial,
which is called \textit{the chromatic polynomial of the graph $G$}.
Two graphs are called \textit{chromatically equivalent} if its chromatic polynomial are coincide.
A graph $G$ is called \textit{chromatically unique} if for any graph $H,\, P(G, x) = P(H, x)$ implies that
$G$ and $H$ are isomorphic.

The following question is especially interesting:
is any complete $t$-partite graph $K(n_1, n_2, \ldots, n_t)$ chromatically
unique whenever $t\geq 3$ and $n_1\geq n_2\geq \ldots \geq n_t \geq 2$?

List some known results, any additional details one can find in the book \cite{DongKohTeo} and
in the monograph \cite{Zhao}.
\begin{enumerate}
    \item A graph $K(n_1, n_2)$ is chromatically unique if $n_1\geq n_2\geq 2$, see \cite{KohTeo}.
    \item A graph $K(n_1, n_2, n_3, \ldots n_t)$ is chromatically unique if $t\geq 3$ and $n_1\geq n_2\geq \ldots n_t\geq2$ and $n_1 - n_t\leq 4$,
    see \cite{KorolevaR0, KorolevaR1, KorolevaR2, BarSenDC}.
    \item A graph $K(n_1, n_1, n_3)$ is chromatically unique if $n_1 - 1\geq n_3 \geq2$, see \cite{LiuZhaoYe2004}.
    \item A graph $K(n_1, n_1 - 1, n_3)$ is chromatically unique if $n_1 - 1 \geq n_3 \geq2$, see \cite{Gein}.
\end{enumerate}

The main result of this paper is the following
\begin{theorem*}
    A graph $K(n_1, n_2, n_3)$ is chromatically unique
    if $n_1\geq n_2\geq n_3\geq 2, n_1 - n_3\leq 5$ and $n_1 + n_2 + n_3 \not \equiv  2 \mod{3}$.
\end{theorem*}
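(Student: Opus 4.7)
The plan is to suppose $P(H,x) = P(G,x)$ with $G = K(n_1, n_2, n_3)$ and deduce $H \cong G$. Since item~(2) already handles $n_1 - n_3 \leq 4$, the genuinely new case is $n_1 - n_3 = 5$, though I would aim at a uniform argument for all $n_1 - n_3 \leq 5$. Reading off the leading coefficients of the chromatic polynomial gives $|V(H)| = n_1 + n_2 + n_3$, $|E(H)| = n_1 n_2 + n_2 n_3 + n_1 n_3$, the triangle count of $H$ equals $n_1 n_2 n_3$, $\chi(H) = 3$, and $H$ is $K_4$-free.

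Next, I would fix a proper $3$-colouring of $H$ with colour classes of sizes $m_1 \geq m_2 \geq m_3$, so that $H$ is a spanning subgraph of $K(m_1, m_2, m_3)$ obtained by deleting a set $F$ of cross-edges that destroy $\tau(F)$ triangles in total. Writing $\sigma_k$ for the $k$th elementary symmetric polynomial and setting $m = (m_1, m_2, m_3)$, $n = (n_1, n_2, n_3)$, the invariants above yield $\sigma_1(m) = \sigma_1(n)$, $\sigma_2(m) - |F| = \sigma_2(n)$, and $\sigma_3(m) - \tau(F) = \sigma_3(n)$. Since $|F|, \tau(F) \geq 0$ and $\sigma_2, \sigma_3$ are Schur-concave on partitions of fixed sum, the $m$-partition is at least as balanced as the $n$-partition; combined with $n_1 - n_3 \leq 5$ this leaves only finitely many candidate~$m$.

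The crux is to prove $F = \emptyset$: once this holds, equality of the three elementary symmetric polynomials forces $m = n$, and so $H \cong G$. To force $F = \emptyset$ I would combine the bound $\tau(F) \leq m_1 \cdot |F|$ with the next coefficient of $P(H,x)$, which counts certain $4$-vertex subgraphs (notably $C_4$, $P_4$, and $K_{1,1,2}$). Each deleted cross-edge perturbs these counts in a prescribed way, so the higher invariants tightly constrain both $|F|$ and how $F$ is distributed across the three bipartitions of $K(m_1, m_2, m_3)$; a careful case analysis on the finitely many candidate partitions $m$ should then rule out $F \neq \emptyset$.

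This case analysis is the main obstacle, and it is where the hypothesis $n_1 + n_2 + n_3 \not\equiv 2 \pmod 3$ should enter. I expect that when the vertex total is $\equiv 2 \pmod 3$ there is a near-balanced competitor $m$ together with a small, structured set $F$ whose contributions to the leading chromatic invariants conspire to match those of $G$; the modular hypothesis eliminates this sporadic configuration by a divisibility argument on one of the higher-order subgraph counts.
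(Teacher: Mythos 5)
Your outline gets the standard reductions right (equal vertex, edge and triangle counts, $\chi(H)=3$, $H$ a spanning subgraph of some $K(m_1,m_2,m_3)$ minus a cross-edge set $F$, and $F=\emptyset$ forcing $m=n$ via the elementary symmetric functions), but the proof stops exactly where the theorem actually lives. The step ``a careful case analysis \dots should then rule out $F\neq\emptyset$'' is asserted, not performed, and the tools you propose for it are not shown to be strong enough. Two concrete problems: (i) the inference from $\sigma_2(m)\geq\sigma_2(n)$ to ``$m$ is at least as balanced as $n$'' uses Schur-concavity backwards (dominance implies the $\sigma_2$ inequality, not conversely), and even after fixing this the ``finitely many candidate $m$'' are really finitely many one-parameter families $(q+a,q+b,q+c)$, so each case must be settled for all sizes simultaneously, not by a finite check; (ii) the invariant you want to use for this --- the next coefficient of $P(H,x)$, i.e.\ counts of $4$-vertex subgraphs --- is far weaker than what is needed. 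The paper's argument hinges on the Zykov coefficient $pt(\cdot,\chi+1)$ (the number of partitions of the vertex set into $\chi+1$ independent sets): for each candidate $v\neq u$ one has an exact value $\Delta pt(H,K(v))=c\cdot 2^{\,q-4}$, while the garland machinery of Baransky--Sen'chonok gives $|E|\leq\Delta pt\leq 2^{|E|}-1$, which eliminates all large $q$ at a stroke; the surviving small cases ($q=4,5$, $|E|=6,7$) then require a delicate enumeration of ``interesting garlands'' combined with the triangle invariant ($\xi_1-\xi_2-2\xi_3$). Nothing in your sketch produces an exponential-versus-polynomial (here exponential-versus-constant) tension of this kind, and without it the per-family analysis does not close.

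You have also misread where the hypothesis $n_1+n_2+n_3\not\equiv 2\pmod 3$ enters. It is not a divisibility obstruction killing a sporadic competitor; it restricts attention to the residues $0$ and $1$, where the only partitions with $n_1-n_3=5$ not already covered by earlier results sit at height $4$ of the partition lattice $NPL(n,3)$ and reduce to exactly two families, $(q+3,q-1,q-2)$ and $(q+3,q,q-2)$, each handled by its own proposition (the other height-$4$ elements being known chromatically unique from prior work of Gein and Chia--Ho). In the residue-$2$ case the analogous partitions are simply not treated, which is why the theorem excludes it --- there is no modular ``conspiracy'' to refute. So the proposal is a reasonable opening but has a genuine gap at its central step, and its guess about the role of the congruence condition does not match what is actually needed.
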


Chromatically uniqueness of a graph $K(n_1, n_2, n_3)$, where $n_1 - n_3\leq 4$ was proved in \cite{KorolevaR0, KorolevaR1, KorolevaR2}.
The main aim of this paper is proving the theorem in the case when $n_1 - n_3 = 5$.

\section{Preliminaries}
A \textit{partition of a number $n$} is a sequence of nonnegative integers
$u = (u_1, u_2, \ldots)$ such that $u_1 \geq u_2 \ldots$, and
$u$ contains only finite non-zero elements,
and $n = \sum_{i=1}^{\infty} u_i$. The \textit{length} of the partition $u$ is the number $l$,
such that $u_l > 0$ and $u_{l + 1} = u_{l + 2} = \ldots = 0$.
When we write a partition, we will often omit its zero elements.

Let $u = (u_1, u_2, \ldots)$ and $v=(v_1, v_2, \ldots)$ be two partitions of a number $n$.
Then $v \trianglelefteq u$ if
\begin{align*}
    v_1 &\leq u_1, \\
    v_1 + v_2 &\leq u_1 + u_2, \\
              &\ldots \\
    v_1 + v_2 + \ldots + v_{t-1} &\leq u_1 + u_2 + \ldots + u_{t-1},
\end{align*}
where $t$ is the greatest of lengths $u$ and $v$.
The relation $\trianglelefteq$ is called \textit{dominance order}.
As it was shown in \cite{Brylawski}, all partitions of the number $n$
form a lattice with respect to $\trianglelefteq$.

As it was proved in \cite{lattice}, all partitions of the number $n$ with fixed length form a lattice
with respect to $\trianglelefteq$. Also Baransky and Sen'chonok in \cite{lattice}
introduce a notion of \textit{an elementary transformation}.
A partition $v = (v_1, v_2, \ldots v_t)$ is a result of an elementary transformation of
a partition $u = (u_1, u_2, \ldots, U_t)$, if there are such indicies $i$ and $j$ that
1)~$1\leq i < j \leq t$,
2)~$u_i - 1\geq u_{i+1}$ and $u_{j-1}\geq u_{j} + 1$,
3)~$u_i - u_j = \delta \geq 2$,
4)~$v_i = u_i - 1, v_j = u_j + 1, u_k = v_k$ for all $k=1, 2,\ldots, t, k\neq i, j$.
It was proved in \cite{lattice}, that $v\trianglelefteq u$ holds if and only if the partition $v$ can be obtained
from the partition $u$ with finite number elementary transformations.

Every complete $t$-partite graph with $n$ vertices can be identified with partition of length $t$ of
the number $n$.
Let $u = (u_1, u_2, \ldots, u_t)$ be a partition of length $n$ of the number $n$.
We will write $K(u)$ instead of $K(u_1, u_2, \ldots, u_t)$ and
denote parts of graph $K(u)$ as $V_i$ where $|V_i| = u_i$ for all $i=1, 2, \ldots t$.

Let $u$ be a partition of a number $n$ of length $t$.
We present the following schema for proving chromatic uniqueness of the graph $K(u)$.
By contradiction, we assume that the graph $K(u)$ is not chromatically unique.
It means, that there exists a graph $H$, which is nonisomorphic to the graph $K(u)$,
and graphs $H$ and $K(u)$ are chromatically equivalent.
It is clear, that the chromatic number of the graph $H$ is equal to $t$, so the graph $H$ can be obtained
from some complete $t$-partite graph by deleting some set of edges $E$.
It was shown in \cite{ZhaoRoots}, that different complete multipartite graphs
are not chromatically equivalent, so $E$ must be non empty.

Assume that some number is assigned to every graph. This number is called \textit{a chromatic invariant}
if it is the same for all chromatic equivalent graphs.
If $\alpha(G)$ is a chromatic invariant and $G_1$, $G_2$ are two arbitrary graphs, than denote
$\Delta \alpha(G_2, G_1) = \alpha(G_2) - \alpha(G_1)$. It is well known (see, for example, \cite{ABR}),
that the number of vertices, the number of edges,
the number of connected components and the number of triangles are chromatic invariants.

According to the Zykov's theorem (see, for example, \cite{ABR}),
the chromatic polynomial can be written as $P(G, x) = \sum\limits_{i=\chi}^{n} pt(G, i)x^{(i)}$,
where $pt(G, i)$ is a number of way to partition the vertex set of the graph $G$
into $t$ independent set, and $x^{(i)}$ is a factorial power of number $x$,
that is $x^{(i)} = x(x - 1)\cdot\ldots (x - i + 1)$.
It follows from Zykov's theorem, that numbers $pt(G, i), i=\chi, \ldots, n$ are chromatic
invariants. We are mostly interested in $pt(G, \chi + 1)$, which we will write as $pt(G)$.

It is clear, that every complete $t$-partite graph is $t$-colorable, but is not $(t-1)$-colorable;
in other words, the chromatic number of complete $t$-partite graph is equal to $t$.
Compute $pt(K(u))$ for complete multipartite graph $K(u_1, u_2, \ldots, u_t)$.
It is easy to show, that any partition of the vertex set of the graph $K(u)$ into $t + 1$ parts
can be obtained by splitting exactly one part into two nonempty subsets;
so $pt(K(u)) = \sum\limits_{i=1}^n 2^{u_i - 1} - t$.

It was investigated in \cite{BarSenDC}, how invariant $pt$ changes from graph $K(v)$ to graph $H$.
Introduce all necessary definitions and auxiliary statements.

A complete multipartite subgraph $G_1$ of the graph $K(v)$ is called \textit{$E$-subgraph},
if every part of the graph $G_1$ is contained in some part of the graph $K(v)$,
and the edge set of graph $G_1$ is contained in the set $E$.
An arbitrary disjoint set of $E$-subgraphs is called \textit{a garland}.
We will say that the garland $G'$ destroys a part $V_i$,
if every vertex of $V_i$ is contained in some $E$-subgraph of the garland $G'$.
A garland of cardinality $p$, which destroys exactly $p-1$ parts,
is called \textit{interesting}.
The set of all edges of all $E$-subgraph of the garland is called \textit{edge aggregate}.
A garland is called $k$-edge if its edge aggregate contains exactly $k$ edges.
Following properties was proved in \cite{BarSenDC}.
\begin{enumerate}[1)]
    \item If the chromatic number of the graph $H$ is equal to $t$,
    then every garland of cardinality $p$ destroys at most $p-1$ parts.
    \item Each garland is uniquely defined by its edge aggregate.
    \item A number $\Delta pt(H, K(v))$ is equal to the number of all interesting garlands.
\end{enumerate}

The next lemma follows from this properties.
\begin{lemma}[Corollary 2, \cite{BarSenDC}]\label{lemma:PT_Main}
    If a graph $H$ is obtained from graph $K(v)$ by deleting some set of edges $E$,
    and graphs $K(u)$ and $H$ are chromatically equivalent,
    then $|E| \leq \Delta pt(H, K(v)) \leq 2^{|E| - 1}$.
\end{lemma}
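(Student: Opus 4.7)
My approach is to derive both inequalities directly from the three structural properties of garlands listed just before the lemma. In both cases, the core object is the edge aggregate, which by Property~2 uniquely represents each garland as a subset of $E$.

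\textbf{Lower bound $|E| \leq \Delta pt(H, K(v))$.} For each $e \in E$ with endpoints $x \in V_i$ and $y \in V_j$ ($i \neq j$), the one-edge graph on $\{x,y\}$ is a $K(1,1)$-subgraph with parts in distinct parts of $K(v)$ and its unique edge in $E$, hence an $E$-subgraph. The singleton collection $\{e\}$ is therefore a garland of cardinality $p = 1$. Because $H$ is chromatically equivalent to $K(u)$, the chromatic number of $H$ equals $t$, so Property~1 forces this garland to destroy at most $p - 1 = 0$ parts, i.e.\ exactly zero; hence it is interesting. Property~2 distinguishes the $|E|$ singleton garlands from each other, and Property~3 identifies $\Delta pt(H, K(v))$ with the total number of interesting garlands, so $\Delta pt(H, K(v)) \geq |E|$.

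\textbf{Upper bound $\Delta pt(H, K(v)) \leq 2^{|E|-1}$.} Property~2 embeds the set of interesting garlands into the power set $2^E$ via the edge aggregate, and since an interesting garland must contain at least one $E$-subgraph, the empty subset is never realized; this already gives $\Delta pt(H, K(v)) \leq 2^{|E|} - 1$. To push this down to $2^{|E|-1}$, I would pair subsets of $E$ using an involution: fix $e_0 \in E$ and match each $S \subseteq E$ with $S \triangle \{e_0\}$. The power set $2^E$ thereby partitions into $2^{|E|-1}$ orbits of size two, and the crux is to show that in each orbit at most one member is the edge aggregate of an interesting garland. Summing over orbits then yields the desired bound.

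\textbf{Main obstacle.} The orbit claim for the upper bound is where the real content of the lemma lies. To establish it one has to verify that toggling a single edge $e_0$ in an edge aggregate cannot preserve \emph{both} (i)~the decomposition into vertex-disjoint $E$-subgraphs, each of which must itself be a complete multipartite subgraph of $K(v)$, \emph{and} (ii)~the ``$p$ components destroy exactly $p - 1$ parts'' condition that defines interestingness. In spirit, the argument is that the $E$-subgraphs appearing in a garland are forced to be \emph{maximal} complete multipartite pieces inside their connected component, and a single-edge toggle must break maximality (or disjointness, or the interestingness count) on one side of the involution. This maximality-and-toggling analysis is the content of \cite[Corollary~2]{BarSenDC}; the lower bound, by contrast, is a one-line consequence of Properties~1--3 applied to singleton edges, and serves to certify tightness at $|E| = 1$.
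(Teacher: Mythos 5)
Your lower bound is correct and is exactly the argument the paper intends: each $e\in E$ spans a $K(1,1)$ $E$-subgraph, the resulting one-element garland must destroy at most $0$ parts by Property~1 (since $\chi(H)=t$), hence is interesting, and Properties~2 and~3 convert these $|E|$ pairwise distinct garlands into $|E|\le\Delta pt(H,K(v))$.

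The upper bound is where the proposal fails, and in two ways. First, the target is misread: every application of the lemma in the paper uses the bound $2^{|E|}-1$, not $2^{|E|-1}$ (e.g.\ $\le 15$ for $|E|=4$, $\le 63$ for $|E|=6$, $\le 127$ for $|E|=7$), so the printed exponent is a typo for $2^{|E|}-1$; that bound is immediate from Properties~2 and~3, since interesting garlands inject into the nonempty subsets of $E$ via their edge aggregates, which is the one-line proof the paper has in mind when it says the lemma ``follows from this properties.'' Second, your involution $S\mapsto S\mathbin{\triangle}\{e_0\}$ cannot be repaired to give $2^{|E|-1}$. The orbit claim is false: take $e_1=xy$ and $e_0=xz$ with $y,z$ in the same (large) part of $K(v)$; then $\{e_1\}$ and $\{e_1,e_0\}$ are the edge aggregates of a $K(1,1)$-garland and a $K(2,1)$-garland, both of cardinality one and hence both interesting, yet they lie in the same orbit. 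Your supporting premise that the $E$-subgraphs of a garland are forced to be \emph{maximal} complete multipartite pieces of their component contradicts the definition in use here, which allows an arbitrary disjoint family of $E$-subgraphs. More fundamentally, with $|E|=2$ and two adjacent edges as above, Properties~1--3 are all satisfied while the number of interesting garlands is $3>2=2^{|E|-1}$; so no argument relying only on those three properties can ever establish the bound $2^{|E|-1}$ (such an $H$ simply fails to be chromatically equivalent to any $K(u)$, but seeing that requires other invariants). Since you also explicitly defer the crux to the cited reference, the upper-bound half of the proposal is incomplete even on its own terms.
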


Let $G' = \{G'_1, G'_2,\ldots, G'_p\}$ be a garland.
We will say, that garland $G'$ has type $H_1\disjoint H_2\disjoint\ldots\disjoint H_p$,
where $\{H_1, H_2, \ldots H_p\}$ is a set of graphs, if $G'_i \simeq H_I$ for all $i=1, 2, \ldots p$.
Denote a number of interesting garlands, which edge aggregates contain exactly $k$ edges, as $\mu_k$.

Let $e$ be an arbitrary edge from $E$.
Denote the number of triangles of the graph $K(v)$, which contain edge $e$, as $\xi_1(e)$.
Let $\xi_i = \sum_{e\in E} \xi_1(e)$.

Consider a triangle in the graph $G$, which contains exactly two edges from $E$. Denote them as $e_1$ and $e_2$.
Subgraph, generated by $\{e_1, e_2\}$ is called a $\Xi_2$-subgraph.
Denote the number of such subgraphs as $\xi_2$.
Denote the number of triangles in $\gen{E}$ as $\xi_3$.

Denote the number of triangles in the graph $G$ as $I_3(G)$.
In \cite{KorolevaR0} the equation $\Delta I_3(K(v), H) = \xi_1 - \xi_2 - 2\xi_3$ was established.
Notice, when an edge is deleted, a new triangle can not be produced,
so $\Delta I_3(K(v), H)$ is equal to the number
of triangles in $K(v)$, which are destroyed by deleting edge set $E$ from $K(v)$.

The following lemma shows a connection between the number of interesting two-edge garland,
$\Xi_2$-subgraphs and the number of triangles in the graph $\gen{E}$.

\begin{lemma}\label{EdgePairsLemma}
    Let each part in the graph $K(v)$ contains at least three vertices and the edge set $E$ was deleted.
    Let $(d_1, d_2, \ldots, d_k)$ be a sequence of degree of vertexes of the graph $\gen{E}$. Then
    $$
    \mu_2 + \xi_2 + 3\xi_3 = \sum\limits_{i=1}^k \binom{d_i}{2} \leqslant \binom{|E|}{2}.
    $$
\end{lemma}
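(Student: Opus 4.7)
The plan is to interpret both sides of the identity combinatorially and to exhibit an exact correspondence between the relevant objects. The sum $\sum_{i=1}^k \binom{d_i}{2}$ counts unordered pairs of distinct edges of $\gen{E}$ that share a common vertex: each such pair is counted exactly once, namely in the term $\binom{d_v}{2}$ for their common vertex $v$. The inequality $\sum_{i=1}^k \binom{d_i}{2} \leq \binom{|E|}{2}$ is then immediate, since these edge-pairs form a subset of the $\binom{|E|}{2}$ unordered pairs of edges of $E$.

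For the equality, fix an edge-pair $(e_1, e_2)$ with common vertex $v$ and distinct other endpoints $a, b$. Since $e_1, e_2 \in E$, the vertex $v$ lies in a part of $K(v)$ different from the parts containing $a$ and $b$. I would split into three cases: (A) $a, b$ lie in the same part of $K(v)$; (B) $a, b$ lie in distinct parts of $K(v)$ and $\{a,b\} \not\in E$; (C) $a, b$ lie in distinct parts of $K(v)$ and $\{a,b\} \in E$. In case (A), the subgraph on $\{v, a, b\}$ with edges $e_1, e_2$ is a complete bipartite $E$-subgraph isomorphic to $K(1, 2)$. In case (B), $\{v, a, b\}$ spans a triangle of $K(v)$ containing exactly two edges of $E$, i.e.\ a $\Xi_2$-subgraph. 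In case (C), $\{v, a, b\}$ spans a triangle of $\gen{E}$.

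It remains to match these contributions with $\mu_2$, $\xi_2$, and $3\xi_3$. Every $\Xi_2$-subgraph is uniquely determined by its pair of $E$-edges, which yields the $\xi_2$ term; every triangle of $\gen{E}$ contains exactly three pairs of edges sharing a vertex and so contributes $3\xi_3$. The main technical step, and the only place where the hypothesis on part sizes intervenes, is to argue that every interesting garland whose edge aggregate has size two is precisely a single $E$-subgraph isomorphic to $K(1, 2)$, hence produced by a case-(A) pair. This needs justification because a $2$-edge aggregate could a priori also arise from a cardinality-$2$ garland composed of two disjoint $K_2$ $E$-subgraphs. I would rule this out as follows: such a garland has only four vertices altogether, and since each of its two $K_2$'s has endpoints in two distinct parts of $K(v)$, at most two of those four vertices can lie in any single part. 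Therefore it cannot destroy a part of size $\geq 3$, while an interesting cardinality-$2$ garland is required to destroy exactly one part. Every $K(1,2)$ $E$-subgraph is interesting for the same reason (its three vertices cannot fill a part of size $\geq 3$), so the count is exact.

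Assembling the three cases yields $\mu_2 + \xi_2 + 3\xi_3 = \sum_{i=1}^k \binom{d_i}{2}$, and the inequality from the first paragraph completes the lemma.
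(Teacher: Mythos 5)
Your proof is correct and follows essentially the same route as the paper: count pairs of adjacent edges via $\sum_i \binom{d_i}{2}$, classify each such pair as an interesting two-edge garland of type $K(2,1)$, a $\Xi_2$-subgraph, or a pair of edges of a triangle in $\gen{E}$ (counted three times), and use the hypothesis that every part has at least three vertices to exclude a pair of nonadjacent edges as the edge aggregate of an interesting garland. Your write-up merely makes explicit the details the paper leaves implicit (why each $K(2,1)$ $E$-subgraph is interesting and why a cardinality-two garland of disjoint edges cannot destroy a part).
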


\begin{proof}
    Because each part contains at least three vertices, an interesting two-edge garland has type $K(2, 1)$,
    because a pair of nonadjacent edges can not be edge aggregate of any interesting garland,
    since such garland should destroy some part, which is impossible.

    Consider an arbitrary pair of adjacent edge. It either generates an interesting garland, forms $\Xi_2$-subgraph, or lays in some triangle. It is clear,
    that every triangle will be counted three times and subgraphs of two other types will be counted exactly once. The number of pair of adjacent edges
    is equal to $\sum\limits_{i=1}^k \binom{d_i}{2}$.
\end{proof}

Investigate the case, when inequality from lemma~\ref{EdgePairsLemma} become an equality. It is possible if and only if, when any two edges in $\gen{E}$
are adjacent.

Let $G_1 = (VG_1, EG_1), G_2 = (VG_2, EG_2)$ be two graphs. Define a graph $G_1 + G_2$ using following relations:
\begin{gather*}
    V(G_1+G_2) = VG_1 \disjoint VG_2 \\
    E(G_1+G_2) = EG_1 \disjoint EG_2 \disjoint \left\{\{x,y\} | x\in VG_1, y\in VG_2 \right\}.
\end{gather*}

Denote a graph with $n$ vertices without any edges as $O_n$.

\begin{lemma}
    Let $G$ be a graph without isolated vertices which has $m$ edges and any two edges in $G$ are adjacent. Then $G$ is isomorphic either to triangle
    or to the graph $O_m + O_1$.
\end{lemma}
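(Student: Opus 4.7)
The plan is to split on whether $G$ contains a triangle, and to dispose of the very small cases separately. If $m \leq 2$, then $G$ is either a single edge (isomorphic to $O_1 + O_1$) or two adjacent edges forming a path on three vertices (isomorphic to $O_2 + O_1$), so the statement is immediate. From now on assume $m \geq 3$.

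First I would treat the case that $G$ contains a triangle $T$ on some vertex set $\{a,b,c\}$, and show that $G$ must equal $T$. Suppose, for contradiction, that there is a fourth edge $e = \{x,y\}$ not in $T$. By hypothesis $e$ is adjacent to each of the three edges $\{a,b\}$, $\{b,c\}$, $\{a,c\}$, so each of these pairs must meet $\{x,y\}$. Since $|\{x,y\}| = 2$ but there are three pairs to hit, one endpoint, say $x$, must belong to two of the edges of $T$; this forces $x \in \{a,b,c\}$. A short case check on which two edges $x$ covers then pins $y$ down to the remaining vertex of $T$, making $e$ one of the three edges of $T$, a contradiction. Hence $G$ is exactly the triangle.

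Next, assume $G$ is triangle-free. The key intermediate claim is that any three edges of $G$ share a common vertex. Indeed, let $e_1, e_2, e_3$ be three edges; adjacency of $e_1$ and $e_2$ gives a common vertex $v$, so we may write $e_1 = \{v,a\}$ and $e_2 = \{v,b\}$ with $a \neq b$. If $v \notin e_3$, then adjacency of $e_3$ with $e_1$ and $e_2$ forces $e_3 = \{a,b\}$, producing the triangle $\{v,a,b\}$ — contrary to our assumption. Hence $v \in e_3$, proving the claim. Now fix any two edges $e_1, e_2$ meeting at a vertex $v$; applying the three-edge principle to $\{e_1, e_2, e\}$ for each remaining edge $e$ shows that every edge of $G$ passes through $v$. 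Since $G$ has no isolated vertices, this means $G \cong K_{1,m}$, which is precisely $O_m + O_1$.

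I expect the main obstacle to be the triangle-extension case: one must be careful in the pigeonhole step to verify that no fourth edge can be attached to a triangle without either duplicating an existing edge or violating pairwise adjacency. The triangle-free half is a clean forward deduction once the three-edge common-vertex claim is in hand.
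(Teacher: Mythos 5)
Your proof is correct. Both you and the paper split on whether $G$ contains a triangle, and the triangle half is handled the same way (any extra edge would have to meet all three triangle edges and so would duplicate one of them). Where you genuinely diverge is the star half: the paper first observes that pairwise adjacency forbids cycles of length greater than $3$, concludes that a triangle-free $G$ is a tree, and then uses a leaf $x$ with neighbour $y$ to force every edge through $y$; you instead prove a small Helly-type claim --- in a triangle-free graph with pairwise adjacent edges, any three edges share a vertex --- and immediately conclude that all edges pass through the common vertex of two fixed edges, giving $K(m,1) = O_m + O_1$. Your route avoids any appeal to acyclicity or tree structure (and quietly sidesteps the paper's slight imprecision that a cycle-free graph is only a forest, with connectedness needing the pairwise-adjacency hypothesis), at the cost of a separate check for $m \leq 2$ where ``two fixed edges'' are not available; the paper's route is shorter to state but leans on the reader to fill in the cycle-exclusion and connectivity details. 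Both are complete and elementary.
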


\begin{proof}
    It is clear, that there are no cycles of length greater than 3 in the graph $G$ (in other case, there is a pair of nonadjacent edges).
    If there is a triangle in the graph $G$, then there are no other edges, because in other case such edge should go through two vertices of the triangle,
    but in this case the graph $G$ contains multiple edges.

    The last case to be considered, when there are no cycles in the graph $G$, so $G$ is a tree. Let $x$ be a leaf and it is adjacent with a vertex $y$.
    Then all another edges (if they exist) should go through the vertex $y$, therefore, the graph
    $G$ isomorphic to the graph $O_m + O_1$.
\end{proof}

\textbf{Remark.}
    It is clear, that graphs $O_m + O_1$ and $K(m, 1)$ are isomorphic. We well write, that a subgraph of the graph $\gen{E}$ is
    \textit{coordinated subgraph of type $K(m, 1)$}, if it is isomorphic to $K(m, 1)$ and all its $m$ vertices degree one lay in the same part of the graph
    $K(v)$.

Subset $E_1$ of the set $E$ is called \textit{uncontinuable}, if there is no garland in $\gen{E}$, which contains all edges from $E_1$.
In other case, subset is called \textit{continuable}. Remark, that an empty set is continuable.

Let $E_1$ be a subset of $E$. Subset $E_2$ of the set $E_1$ is called \textit{continuable outside of $E_1$},
if there exists garland $G'$ with edge aggregate $E'$, such that $E_2 = E'\cap E_1$.

\begin{lemma}\label{UncontLemma}
    Let $E_1\subset E$ and $E_1$ contains at most $N$ continuable outside of $E_1$ subsets.
    Then the number of garlands is not greater than $N\cdot 2^{|E| - |E_1|} - 1$.
\end{lemma}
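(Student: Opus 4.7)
The plan is to inject the set of garlands into a suitably sized product set and then trim off one impossible element. First I would invoke property 2 recalled in the preliminaries: each garland $G'$ is uniquely determined by its edge aggregate $E' \subseteq E$, so counting garlands reduces to counting the possible edge aggregates.

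Second, for each garland $G'$ with aggregate $E'$, I would decompose $E'$ as the disjoint union $E' = E_2 \disjoint E_3$ with $E_2 := E' \cap E_1 \subseteq E_1$ and $E_3 := E' \setminus E_1 \subseteq E \setminus E_1$. By the very definition of ``continuable outside of $E_1$'', the subset $E_2$ must be continuable outside of $E_1$ (with the garland $G'$ itself as witness), so $E_2$ ranges over a family of at most $N$ subsets of $E_1$, while $E_3$ ranges freely over the $2^{|E|-|E_1|}$ subsets of $E \setminus E_1$. The assignment $G' \mapsto (E_2, E_3)$ is injective, since the pair recovers $E'$ and $E'$ determines $G'$. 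This already yields the preliminary bound $N \cdot 2^{|E|-|E_1|}$ on the number of garlands.

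Third, to sharpen by one, I would note that the empty subset $\emptyset \subseteq E_1$ is among the $N$ continuable-outside subsets of $E_1$ (in line with the earlier remark that the empty set is continuable), so the pair $(\emptyset, \emptyset)$ lies in the product. However this pair corresponds to the empty edge aggregate, which is not the aggregate of any garland, since any garland must contain at least one $E$-subgraph and hence contributes at least one edge. Excluding this unique impossible pair produces the desired bound $N \cdot 2^{|E|-|E_1|} - 1$.

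The main subtlety is precisely the $-1$ term: one has to be careful that $\emptyset$ is indeed counted among the $N$ continuable-outside subsets and that genuine garlands are required to be nonempty (so that the empty edge aggregate is not realized). Both points follow directly from the conventions set up in the preliminaries, so the proof is essentially a clean counting argument built on property 2 and the definition of continuability.
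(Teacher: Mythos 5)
Your argument is correct and essentially the same as the paper's: the paper partitions the garlands by the trace $E'\cap E_1$ of their edge aggregates (at most $N$ classes, each of size at most $2^{|E|-|E_1|}$) and gains the $-1$ from the class with empty trace, which is exactly your injection-plus-excluded-pair count. One small imprecision: to see that $\varnothing$ is continuable \emph{outside of} $E_1$ you should not appeal to the remark that the empty set is continuable (a different notion), but rather to the paper's convention of counting $\varnothing$ among the $N$ such subsets, or to the observation that any single edge of $E\setminus E_1$ forms a one-edge garland whose aggregate misses $E_1$.
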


\begin{proof}
    Let $X$ be a set of all garlands. Consider an arbitrary garland with edge aggregate $\hat{E}$. Notice, that a set of edges
    $E' = \hat{E}\cap E_1$ is continuable outside of $E_1$ subset.

    Consider an arbitrary subset $E'\subseteq E_1$, which is continuable outside of $E_1$. Let $X(E', E_1)$ be a set of all garlands,
    such that an intersection of their edge aggregate and set $E_1$ is equal to $E'$.
    Then $|X(E', E_1)|\leq 2^{|E| - |E_1|}$. Given the fact that $X=\disjoint_{E'} X(E', E_1)$,
    one can deduce $|X| = \sum\limits_{E'} |X(E', E_1)| = X(\varnothing, E_1) + \sum\limits_{E'\neq\varnothing} |X(E', E_1)|
    \leqslant 2^{|E| - |E_1|} - 1 + \sum\limits_{E'\neq\varnothing} 2^{|E| - |E_1|}  = N\cdot 2^{|E|-|E_1|} - 1$.
\end{proof}

The next three lemmas follow from lemma~\ref{UncontLemma}.

\begin{lemma}\label{TriagLemma}
    If there is a triangle in $\gen{E}$, then the number of garlands does not exceed $5\cdot 2^{|E| - 3} - 1$.
\end{lemma}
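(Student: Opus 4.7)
The plan is to apply Lemma~\ref{UncontLemma} with $E_1$ equal to the edge set of the given triangle in $\gen{E}$, so $|E_1|=3$. The stated bound will follow immediately once I show that at most $N=5$ of the $8$ subsets of $E_1$ are continuable outside $E_1$.

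The key structural observation I expect to rely on is that any $E$-subgraph containing two sides of a triangle of $K(v)$ must contain the third side as well. Let $a,b,c$ be the triangle vertices, lying in three distinct parts of $K(v)$. If two adjacent sides of the triangle, say sharing the vertex $b$, occurred in the edge aggregate of some garland, then, because the $E$-subgraphs of a garland are vertex-disjoint, both sides would belong to one and the same $E$-subgraph $G'_0$. The graph $G'_0$ is complete multipartite with each of its parts inside a part of $K(v)$, so $a,b,c$ fall into three distinct parts of $G'_0$, forcing the third triangle edge into $G'_0$ as well. This rules out all three two-element subsets of $E_1$ from being continuable outside $E_1$.

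The remaining five subsets are continuable, witnessed by: any single-edge garland supported on $E\setminus E_1$ for $\varnothing$; the single-edge garland on $e_i$ for each singleton $\{e_i\}$; and the garland $\{K(1,1,1)\}$ consisting of the triangle itself (a legitimate $E$-subgraph precisely because all three of its edges lie in $E$) for $E_1$ itself. Feeding $N\le 5$ into Lemma~\ref{UncontLemma} yields the desired inequality. The main obstacle is really just the triangle-closure observation; the remaining bookkeeping is routine, and the degenerate case $|E|=|E_1|=3$, in which $\varnothing$ fails to be continuable, is handled automatically because one then has $N=4$ and the bound $4\cdot 2^{0}=4=5\cdot 2^{|E|-3}-1$ still holds.
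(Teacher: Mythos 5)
Your proof is correct and takes essentially the same route as the paper: the paper also applies Lemma~\ref{UncontLemma} with $E_1$ the edge set of the triangle and $N=5$ (the empty set, the three one-edge subsets, and the triangle itself). Your triangle-closure observation just makes explicit why the three two-edge subsets are not continuable outside $E_1$, a point the paper leaves implicit.
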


\begin{proof}
    Notice, that a triangle has at most 5 continuable outside itself subsets: empty, 3 one-edge subsets and the triangle.
\end{proof}

\begin{lemma}\label{Xi2Lemma}
    \begin{enumerate}
        \item If there is a subgraph of type $\Xi_2$ in $\gen{E}$,
        then the number of garland does not exceed $3\cdot 2^{|E| - 2} - 1$.
        \item If there are two distinct subgraphs of type $\Xi_2$ with edge sets $E_1$ and $E_2$,
        then the number of garlands does not exceed $2^{|E| - 1} + 2^{|E| - |E_1 \cup E_2|} - 1$.
    \end{enumerate}
\end{lemma}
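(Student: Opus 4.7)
The plan for both parts is to invoke Lemma~\ref{UncontLemma}, choosing the edge set of the given $\Xi_2$-subgraph(s) as the distinguished subset $E_1$ and bounding from above the number of its subsets that are continuable outside of it.

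The one substantive observation underlying both bounds, which I would prove first, is the following: if $e_1,e_2$ are the two edges of a $\Xi_2$-subgraph, then no garland has both $e_1$ and $e_2$ in its edge aggregate. To see this, write $e_1=\{a,b\}$ and $e_2=\{b,c\}$; since the missing third triangle edge $\{a,c\}$ is an edge of $K(v)$, the vertices $a,b,c$ lie in three different parts. Two $E$-subgraphs of a garland containing $e_1$ and $e_2$ respectively would both contain $b$ and so fail to be vertex-disjoint; and a single $E$-subgraph containing both edges would have to realize $K(1,1,1)$ on $\{a,b,c\}$, forcing $\{a,c\}\in E$, contrary to the definition of a $\Xi_2$-subgraph.

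With this in hand, part~(1) is immediate: among the four subsets of $\{e_1,e_2\}$, only $\varnothing$, $\{e_1\}$, $\{e_2\}$ can be continuable outside, so Lemma~\ref{UncontLemma} with $N=3$ gives the bound $3\cdot 2^{|E|-2}-1$.

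For part~(2), I would take $E_1\cup E_2$ as the distinguished subset. Applying the structural observation to each of the two $\Xi_2$-subgraphs shows that every subset of $E_1\cup E_2$ that is continuable outside must avoid fully containing $E_1$ and must avoid fully containing $E_2$. A short inclusion-exclusion on the subsets of $E_1\cup E_2$ containing $E_1$ or $E_2$ yields
$$
2\cdot 2^{|E_1\cup E_2|-2}-2^{|E_1\cup E_2|-|E_1\cup E_2|}=2^{|E_1\cup E_2|-1}-1
$$
such ``bad'' subsets, leaving at most $2^{|E_1\cup E_2|-1}+1$ admissible ones. Lemma~\ref{UncontLemma} then delivers the stated bound $2^{|E|-1}+2^{|E|-|E_1\cup E_2|}-1$. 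The main obstacle is the structural observation; once it is established, both parts are direct counting consequences.
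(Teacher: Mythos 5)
Your proposal is correct and follows essentially the paper's own argument: the key fact that no garland's edge aggregate can contain both edges of a $\Xi_2$-subgraph (your structural observation) is exactly the paper's justification, and part~(1) is the same application of Lemma~\ref{UncontLemma} with $N=3$. For part~(2) you route the count through Lemma~\ref{UncontLemma} applied to $E_1\cup E_2$ instead of the paper's direct inclusion--exclusion over subsets of $E$, but this is only a repackaging of the same computation and yields the identical bound $2^{|E|-1}+2^{|E|-|E_1\cup E_2|}-1$.
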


\begin{proof}
    \begin{enumerate}
        \item Notice, that the $\Xi_2$-subgraph edge set  has at most 3 continuable outside itself subsets:
            empty and two one-edge subsets.
        \item Notice, that edge aggregate  of any garland can not contain nor set $E_1$, nor set
        $E_2$
        (since  a garland is a disjoint union complete multipartite graphs,
         then if it contains edges $xy$ and $xz$ of $\Xi_2$-subgraph,
         it also should contain an edge $yz$, which does not lay in $E$, see fig.~\ref{fig:Xi2}).
        Then by inclusion-declusion principle, the number of garlands does not exceed
        $2^{|E|} - 2^{|E| - |E_1|} - 2^{|E| - |E_2|} + 2^{|E| - |E_1\cup E_2|} - 1 = 2^{|E|} - 1 - 2\cdot 2^{|E| - 2} + 2^{|E| - |E_1 \cup E_2|} =
        2^{|E| - 1} + 2^{|E| - |E_1 \cup E_2|} - 1$.
        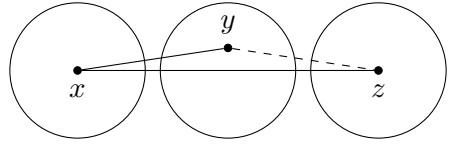
\begin{figure}[h]
        \[
            \begin{tikzpicture}
                \tikzstyle{vertex}=[circle, draw, fill=black, inner sep=1pt]
                \draw (0, 0) node[vertex, label=below:{$x$}](x) {};
                \draw (2, 0.3) node[vertex, label=above:{$y$}](y) {};
                \draw (4, 0) node[vertex, label=below:{$z$}](z) {};
                \draw (0, 0) circle [radius=0.9];
                \draw (2, 0) circle [radius=0.9];
                \draw (4, 0) circle [radius=0.9];
                \draw[black, solid] (y) -- (x) -- (z);
                \draw[black, dashed] (y) -- (z);
            \end{tikzpicture}
        \]
        \caption{$\Xi_2$-subgraph}\label{fig:Xi2}
        \end{figure}
    \end{enumerate}
\end{proof}

Because two distinct $\Xi_2$-sungraphs have no less than 3 edges, than we can state

\begin{corollary}\label{Xi2Cor}
    If there are two distinct $\Xi_2$-subgraph in $\gen{E}$,
    then the number of garland does not exceed $2^{|E| - 1} + 2^{|E| - 3} - 1$.
\end{corollary}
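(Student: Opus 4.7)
The plan is to derive this corollary as a direct numerical consequence of part (2) of Lemma~\ref{Xi2Lemma}. Let $E_1$ and $E_2$ be the edge sets of the two distinct $\Xi_2$-subgraphs. By definition each $\Xi_2$-subgraph is generated by exactly two edges of $E$ lying in a common triangle of $K(v)$, so $|E_1| = |E_2| = 2$; and since a $\Xi_2$-subgraph is determined by its edge pair, the distinctness assumption forces $E_1 \neq E_2$. Consequently $|E_1 \cap E_2| \leq 1$ and $|E_1 \cup E_2| \geq 3$, which is precisely the three-edge lower bound alluded to in the sentence preceding the corollary.

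Once this lower bound is in hand, I would simply substitute $|E_1 \cup E_2| \geq 3$ into the estimate supplied by Lemma~\ref{Xi2Lemma}(2), obtaining $2^{|E|-1} + 2^{|E|-|E_1 \cup E_2|} - 1 \leq 2^{|E|-1} + 2^{|E|-3} - 1$, which is exactly the asserted inequality. I do not expect any genuine obstacle: the corollary is an arithmetic specialisation of the preceding lemma, and the only auxiliary fact to verify is the elementary observation that two distinct two-element subsets of $E$ must jointly cover at least three edges.
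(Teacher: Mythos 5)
Your proposal is correct and follows exactly the paper's route: the paper deduces the corollary from Lemma~\ref{Xi2Lemma}(2) together with the observation stated immediately before it that two distinct $\Xi_2$-subgraphs have at least three edges in their union, which is precisely your argument that $|E_1| = |E_2| = 2$ and $E_1 \neq E_2$ force $|E_1 \cup E_2| \geq 3$. Nothing further is needed.
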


\begin{lemma}\label{K211Lemma}
    If there is a garland of type $K(2, 1, 1)$ in $\gen{E}$,
    then the number of garlands does not exceed $13\cdot 2^{|E| - 5} - 1$.
\end{lemma}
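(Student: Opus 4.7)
The plan is to apply Lemma~\ref{UncontLemma} with $E_1$ equal to the edge aggregate of the given $K(2,1,1)$-garland, so that $|E_1|=5$ and the target bound $13\cdot 2^{|E|-5}-1$ reduces to showing that $E_1$ admits at most $13$ subsets continuable outside of $E_1$. Label the four vertices of the $K(2,1,1)$-subgraph so that $a,b\in V_1$, $c\in V_2$, $d\in V_3$; then $E_1=\{ac,ad,bc,bd,cd\}$.

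The structural observation driving the enumeration is the following: because each $E$-subgraph is a complete multipartite graph with parts contained in parts of $K(v)$, whenever two edges of $E_1$ lie in the same $E$-subgraph and the subgraph touches three distinct parts of $K(v)$, the remaining edge of the induced $K(1,1,1)$ is automatically in the edge aggregate. Concretely, among adjacent pairs in $E_1$, the six pairs $\{ac,ad\}$, $\{bc,bd\}$, $\{ac,cd\}$, $\{ad,cd\}$, $\{bc,cd\}$, $\{bd,cd\}$ each force a new edge, while the two pairs $\{ac,bc\}$ and $\{ad,bd\}$ fit harmlessly inside a $K(2,1)$-subgraph, since here the shared-part vertices $a$ and $b$ both sit in $V_1$ so no extra edge is introduced.

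Combining this with the vertex-disjointness of $E$-subgraphs in a garland, I would enumerate continuable subsets by cardinality: the empty set ($1$); the five singletons, each realized by a single $K(1,1)$ ($5$); the four admissible pairs $\{ac,bd\}$, $\{ad,bc\}$, $\{ac,bc\}$, $\{ad,bd\}$ ($4$); the two triangles $\{ac,ad,cd\}$ and $\{bc,bd,cd\}$, each a $K(1,1,1)$ ($2$); no four-edge subset; and $E_1$ itself, realized by the full $K(2,1,1)$-subgraph ($1$). The total is $1+5+4+2+0+1=13$, and Lemma~\ref{UncontLemma} then delivers the bound.

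The main obstacle is the four-edge case, where every candidate decomposition must be eliminated. For $E_1\setminus\{cd\}$ the two natural $K(2,1)$-subgraphs $\{ac,bc\}$ and $\{ad,bd\}$ share the vertices $a,b$, so they are not vertex-disjoint; no other decomposition into $E$-subgraphs is possible without forcing $cd$. For the four sets of the form $E_1\setminus\{xy\}$ with $xy\neq cd$, every way of grouping adjacent edges either forces an edge outside the subset or produces a triangle on $\{a,c,d\}$ or $\{b,c,d\}$ that intersects the remaining fourth edge in the singleton $c$ or $d$, so again no vertex-disjoint realization exists. Apart from this case, the enumeration is a straightforward check against the structural observation above.
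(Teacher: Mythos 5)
Your proof is correct and follows essentially the same route as the paper: apply Lemma~\ref{UncontLemma} to the edge aggregate $E_1$ of the $K(2,1,1)$-garland and count exactly $13$ subsets continuable outside of $E_1$ (empty, five singletons, the two $K(2,1)$-pairs, the two nonadjacent pairs, the two triangles, and $E_1$ itself), which is precisely the paper's enumeration. Your explicit verification that the remaining two-, three- and four-edge subsets force an extra edge of the induced triangle is a welcome elaboration of what the paper only asserts.
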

\begin{proof}
    A continuable outside of edge set of such garland set of edges should be one of the following:
    empty, 5 one-edge sets,
    2 triangles,
    two garlands of type $K(2, 1)$, 2 pairs of nonadjancent edges and the edge aggregate of this
    garland.
\end{proof}

In addition to proof of lemma~\ref{K211Lemma} notice, that unconinuable outside of the edge aggregate of the garland $K(2, 1, 1)$ should be one of
the following:
\begin{itemize}
    \item 6 two-edge subsets, elements of which  is edges of the same triangle;
    \item 8 three-edge subsets, which does not contain triangles;
    \item 5 four-edge subsets.
\end{itemize}

All possible garlands, which edge aggregates contain no more than four edges, are shown in fig.~\ref{fig:dc}.
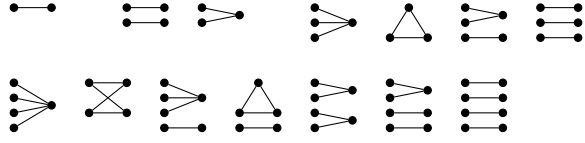
\begin{figure}[h]
\[
\begin{tikzpicture}
    \tikzstyle{vertex}=[circle, draw, fill=black, inner sep=1pt]
    % 1 edge
    \draw (0, 0) node[vertex] {};
    \draw (0.5, 0) node[vertex] {};
    \draw (0, 0) -- (0.5, 0);

    %2 edges
    % 2 non-incedence edges
    \draw (1.5, 0) node[vertex] {};
    \draw (2, 0) node[vertex]{};
    \draw (1.5, -0.2) node[vertex] {};
    \draw (2, -0.2) node[vertex] {};
    \draw (1.5, 0) -- (2, 0);
    \draw (1.5, -0.2) -- (2, -0.2);

    % K(2, 1)
    \draw (2.5, 0) node[vertex] {};
    \draw (2.5, -0.2) node[vertex] {};
    \draw (3, -0.1) node[vertex] {};
    \draw (2.5, 0) -- (3, -0.1) -- (2.5, -0.2);

    %3 edges
    %K(3, 1)
    \draw (4, 0) node[vertex] (k31_1) {};
    \draw (4, -0.2) node[vertex] (k31_2) {};
    \draw (4, -0.4) node[vertex] (k31_3) {};
    \draw (4.5, -0.2) node[vertex] (k31_0) {};
    \foreach \x in {k31_1, k31_2, k31_3} {
        \draw (\x) -- (k31_0);
    }

    % triangle
    \draw (5.25, 0) node[vertex](t1) {};
    \draw (5, -0.4) node[vertex](t2) {};
    \draw (5.5, -0.4) node[vertex](t3) {};
    \draw (t1) -- (t2) -- (t3) -- (t1);

    % K(2, 1)\disjoint K(1, 1)
    \draw (6, 0) node[vertex] (k2_2_1) {};
    \draw (6, -0.2) node[vertex] (k2_2_2) {};
    \draw (6.5, -0.1) node[vertex] (k2_2_0) {};
    \draw (6, -0.4) node[vertex] {};
    \draw (6.5, -0.4) node[vertex] {};
    \draw (k2_2_1) -- (k2_2_0) -- (k2_2_2);
    \draw (6, -0.4) -- (6.5, -0.4);

    % 3 non-incedence edges
    \draw (7, 0) node[vertex] {};
    \draw (7, -0.2) node[vertex] {};
    \draw (7, -0.4) node[vertex] {};
    \draw (7.5, 0) node[vertex] {};
    \draw (7.5, -0.2) node[vertex] {};
    \draw (7.5, -0.4) node[vertex] {};
    \draw (7, 0) -- (7.5, 0);
    \draw (7, -0.2) -- (7.5, -0.2);
    \draw (7, -0.4) -- (7.5, -0.4);

    % 4 edges
    % K(4, 1)
    \def\secondRowStartY{-1};
    \def\startX{0};
    \draw (\startX, \secondRowStartY) node[vertex](k4_1) {};
    \draw (\startX, \secondRowStartY-0.2) node[vertex](k4_2) {};
    \draw (\startX, \secondRowStartY-0.4) node[vertex](k4_3) {};
    \draw (\startX, \secondRowStartY-0.6) node[vertex](k4_4) {};
    \draw (\startX + 0.5, \secondRowStartY-0.3) node[vertex](k4_0) {};
    \foreach \x in {k4_1, k4_2, k4_3, k4_4} {
        \draw (\x) -- (k4_0);
    }

    % K(2, 2)
    \def\startX{1}
    \draw (\startX, \secondRowStartY) node[vertex](k22_1) {};
    \draw (\startX, \secondRowStartY-0.4) node[vertex](k22_3) {};
    \draw (\startX +0.5, \secondRowStartY) node[vertex](k22_2) {};
    \draw (\startX +0.5, \secondRowStartY-0.4) node[vertex](k22_4) {};
    \foreach \x in {k22_1, k22_3} {
        \foreach \y in {k22_2, k22_4} {
            \draw (\x) -- (\y);
        }
    }

    % K(3, 1)\disjoint K(1, 1)
    \def\startX{2};
    \draw (\startX, \secondRowStartY) node[vertex](k31_2_1) {};
    \draw (\startX, \secondRowStartY - 0.2) node[vertex](k31_2_2) {};
    \draw (\startX, \secondRowStartY - 0.4) node[vertex](k31_2_3) {};
    \draw (\startX + 0.5, \secondRowStartY - 0.2) node[vertex](k31_2_0) {};
    \draw (\startX, \secondRowStartY -0.6) node[vertex] {};
    \draw (\startX + 0.5, \secondRowStartY -0.6) node[vertex] {};
    \foreach \x in {k31_2_1, k31_2_2, k31_2_3} {
        \draw (\x) -- (k31_2_0);
    }
    \draw (\startX, \secondRowStartY - 0.6) -- (\startX + 0.5, \secondRowStartY -0.6);

    % triag \disjoint K(1, 1)
    \def\startX{3}
    \draw (\startX + 0.25, \secondRowStartY) node[vertex](t2_1) {};
    \draw (\startX, \secondRowStartY - 0.4) node[vertex](t2_2) {};
    \draw (\startX + 0.5, \secondRowStartY -0.4) node[vertex](t2_3) {};
    \draw (\startX, \secondRowStartY -0.6) node[vertex] (tmp1) {};
    \draw (\startX + 0.5, \secondRowStartY - 0.6) node[vertex] (tmp2) {};
    \draw (t2_1) -- (t2_2) -- (t2_3) -- (t2_1);
    \draw (tmp1) -- (tmp2);

    % 2 * K(2, 1)
    \def\startX{4}
    \draw (\startX, \secondRowStartY) node[vertex] (k12_1_1) {};
    \draw (\startX, \secondRowStartY - 0.2) node[vertex] (k12_1_2) {};
    \draw (\startX + 0.5, \secondRowStartY -0.1) node[vertex] (k12_1_0) {};
    \draw (\startX, \secondRowStartY - 0.4) node[vertex] (k12_2_1) {};
    \draw (\startX, \secondRowStartY - 0.6) node[vertex] (k12_2_2) {};
    \draw (\startX + 0.5, \secondRowStartY -0.5) node[vertex] (k12_2_0) {};
    \foreach \i in {1, 2} {
        \foreach \j in {1, 2} {
            \draw (k12_\i_\j) -- (k12_\i_0);
        }
    }

    % K(2, 1)\disjoint 2K(1, 1)
    \def\startX{5}
    \draw (\startX, \secondRowStartY) node[vertex] (k12_1_1) {};
    \draw (\startX, \secondRowStartY - 0.2) node[vertex] (k12_1_2) {};
    \draw (\startX + 0.5, \secondRowStartY -0.1) node[vertex] (k12_1_0) {};
    \draw (\startX, \secondRowStartY -0.4) node[vertex] (tmp1_1) {};
    \draw (\startX + 0.5, \secondRowStartY -0.4) node[vertex] (tmp1_2) {};
    \draw (\startX, \secondRowStartY -0.6) node[vertex] (tmp2_1) {};
    \draw (\startX + 0.5, \secondRowStartY -0.6) node[vertex] (tmp2_2) {};
    \foreach \i in {1, 2} {
        \draw (tmp\i_1) -- (tmp\i_2);
        \draw (k12_1_\i) -- (k12_1_0);
    }

    % 4 * K(1, 1)
    \def\startX{6}
    \foreach \i in {0, 0.2, 0.4, 0.6} {
        \draw (\startX, \secondRowStartY - \i) node[vertex](local1) {};
        \draw (\startX + 0.5, \secondRowStartY - \i) node[vertex](local2) {};
        \draw (local1) -- (local2);
    }

\end{tikzpicture}
\]

    \caption{All possible garlands, which edge aggregate contains no more than 4 edges}\label{fig:dc}
\end{figure}

Part of the graph $K(v)$ is called \textit{active},
if there is a vertex in this part, which is incedent to some edge from $E$.

\begin{lemma}\label{SixEdgesLemma}
    Let every active part of $K(v)$ contains at least 4 vertices and $|E| = 6$. Let also each garland of cardinality $p$ destroys no more than $p-1$ parts
    of the graph $K(v)$. Then either $\gen{E}$ is an intresting garland of type $K(6, 1)$ and contains exactly 63 intresting garlands; or subgraph $\gen{E}$
    contains no more than 33 intresting garlands.
\end{lemma}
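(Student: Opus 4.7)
The plan is a structural case analysis on $\langle E \rangle$, chaining the bounds of Lemmas~\ref{TriagLemma}, \ref{Xi2Lemma}, \ref{K211Lemma}, Corollary~\ref{Xi2Cor}, and Lemma~\ref{UncontLemma} with judicious choices of the witness set $E_1$. The dividing line is whether $\langle E \rangle$ contains one of the forbidden substructures---a triangle, a $\Xi_2$-subgraph, or a $K(2,1,1)$-garland---or none of them; the threshold $33$ is chosen so that the off-the-shelf bounds work after a modest amount of additional refinement in the borderline cases.

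If $\langle E \rangle$ contains a $K(2,1,1)$-garland, Lemma~\ref{K211Lemma} gives $13 \cdot 2 - 1 = 25$ immediately. If it contains a triangle but no $K(2,1,1)$, Lemma~\ref{TriagLemma} yields only $39$, so I would enlarge the witness set: take $E_1$ to be the triangle $T$ together with one or two of the remaining three edges, and carefully enumerate the subsets of $E_1$ that can be continued to a garland in $E$; the absence of $K(2,1,1)$ restricts how any extra edge can meet $T$, shrinking this enumeration enough that Lemma~\ref{UncontLemma} pushes the bound to $\leq 33$. If $\langle E \rangle$ has no triangle but contains a $\Xi_2$-subgraph, I would distinguish: when two distinct $\Xi_2$s coexist, Lemma~\ref{Xi2Lemma}(2) combined with further inclusion--exclusion over all forbidden adjacent pairs drives the count below $33$, while when exactly one $\Xi_2$ is present, the remaining four edges must avoid creating new $\Xi_2$s or triangles, severely constraining their positions and again closable via Lemma~\ref{UncontLemma} applied to an enlarged $E_1$.

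In the last case $\langle E \rangle$ has neither triangles nor $\Xi_2$-subgraphs, so any two adjacent edges of $\langle E \rangle$ have their non-shared endpoints in the same part of $K(v)$. Consequently, at each vertex of $\langle E \rangle$ the neighbourhood is contained in a single part of $K(v)$, which forces every connected component of $\langle E \rangle$ to be a bipartite subgraph between some $U \subseteq V_a$ and $W \subseteq V_b$ for a fixed pair of parts. I would then split on the distribution of the six edges among components. The extremal configuration is a single coordinated star $K(6,1)$ in which $\langle E \rangle$ itself is an interesting garland (which additionally requires the leaves' part to have at least seven vertices); here every non-empty $S \subseteq E$ shares the star's centre and therefore forms a single coordinated $K(|S|,1)$ E-subgraph destroying no part, producing exactly $2^6 - 1 = 63$ interesting garlands. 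All other distributions---multiple disjoint coordinated stars, $K(2,2)$ with extra edges attached, larger bipartite components, and so on---fall under the $\leq 33$ bound after a direct enumeration that uses the disjointness of garland components and the size lower bound $\geq 4$ on active parts to rule out interesting garlands of large cardinality.

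The main obstacle will be the triangle-but-no-$K(2,1,1)$ subcase, where the generic bound $39$ overshoots $33$ by a nontrivial margin and extracting a tight count requires careful use of the structural information hidden in the absence of $K(2,1,1)$. The bipartite enumeration in the final case is tedious but mostly mechanical once the component structure is fixed, and the unique extremal configuration---coordinated $K(6,1)$ with count $63$---emerges naturally as the only way to realise all $2^6 - 1$ subsets of $E$ as distinct cardinality-one interesting garlands.
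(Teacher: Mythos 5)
Your case split (triangle / $\Xi_2$ / $K(2,1,1)$ / none of these) is legitimate, and your treatment of the last case, including the extremal coordinated $K(6,1)$ with exactly $63$ interesting garlands, agrees with the paper. The genuine gap is in your middle cases: every tool you propose there --- Lemma~\ref{UncontLemma}, Lemma~\ref{Xi2Lemma}, Corollary~\ref{Xi2Cor}, and ``further inclusion--exclusion over forbidden pairs'' --- bounds the number of \emph{all} garlands, whereas the bound $33$ is only true for \emph{interesting} garlands, and in exactly your problematic cases the total garland count can exceed $33$, so no refinement of pure subset counting can close them. Concretely, take a coordinated star with centre $c\in V_1$ and leaves $v_1,\dots,v_5\in V_2$ (with $|V_2|\geq 6$, so the hypothesis on destroyed parts holds), plus the pendant edge $v_1w$ with $w\in V_3$. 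Here $\gen{E}$ has no triangle and exactly one $\Xi_2$-subgraph, yet the total number of garlands is $31+16=47$: every nonempty subset of the five star edges is a one-component garland, and $\{v_1w\}$ together with any subset of $\{cv_2,\dots,cv_5\}$ is a garland (no garland contains both $v_1w$ and $cv_1$, since that would force the missing edge $cw$). Hence Lemma~\ref{UncontLemma}, for any choice of $E_1$, can never yield $\leq 33$ in this configuration; one must discard non-interesting garlands, as the paper does by showing the pendant edge lies in at most two \emph{interesting} garlands (using that a cardinality-$p$ garland destroys at most $p-1$ parts and that active parts have at least $4$ vertices), which gives $31+2=33$. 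Likewise, a triangle together with three further edges disjoint from it and from each other (so no $K(2,1,1)$ arises) has exactly $5\cdot 2^3-1=39$ garlands in total, so your plan to ``push the bound to $\leq 33$'' via an enlarged witness set in the triangle case also cannot succeed as stated.

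The paper's proof is organized differently and avoids this trap: it cases on the largest coordinated star garland $K(N,1)$ inside $\gen{E}$. For $N=6$ one gets the exceptional count $63$; for $N=5$ the sixth edge is shown to lie in at most two interesting garlands, giving $2^5-1+2=33$; for $N=4$ a classification of the possible types of interesting garlands with at most four edges ($K(4,1)$, $K(3,1)\disjoint K(1,1)$, $K(2,1)\disjoint K(2,1)$, $K(2,2)$, triangles), combined with the $\geq 4$-vertex hypothesis to exclude $K(3,1)\disjoint K(1,1)$ and $K(2,1)\disjoint K(2,1)$, gives at most $28$; and when no $K(N,1)$ with $N\geq 4$ exists, the inequality $\mu_2+\xi_2+3\xi_3\leq\binom{6}{2}$ of Lemma~\ref{EdgePairsLemma}, corrected by the $3k$ nonadjacent pairs forced by garlands of type $K(3,1)\disjoint K(1,1)$, yields at most $33$. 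If you wish to keep your decomposition, you must import this interesting-garland bookkeeping (cardinality versus destroyed parts, and the size hypothesis on active parts) into each of your triangle and $\Xi_2$ cases; as written, those cases cannot be completed with the lemmas you cite.
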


\begin{proof}
    Notice, that any garland of cardinality one is interesting, because it can not destroy any part.

    If there is a garland of type $K(6, 1)$ in the graph $\gen{E}$,
    then $\gen{E}$ is an interesting garland of type $K(6, 1)$ and
    it contains exactly 63 interesting garlands.

    Assume there is no garland of type $K(6, 1)$ in $\gen{E}$.
    Assume, that there is garlalnd of type $K(5, 1)$ in the graph $\gen{E}$.
    Denote an edge, which does not lay in $K(5, 1)$, as $e$.
    Then either $e$ is incedent to the vertex of degree one of garland $K(5, 1)$,
    incedent to the vertex of degree five of garland $K(5, 1)$ or non incident to any vertex of garland $K(5, 1)$.
    In all this cases edge $e$ lays in no more than two interesting garlands
    (one-edge garlnand, and, maybe six-edge garland of type $K(5, 1)\disjoint K(1, 1)$ or two-edge
    garland of type $K(2, 1)$.
    Then there are no more than $2^5 - 1 + 2 = 33$ garlands.
    \begin{figure}[!h]
        \includegraphics[width=0.9\textwidth]{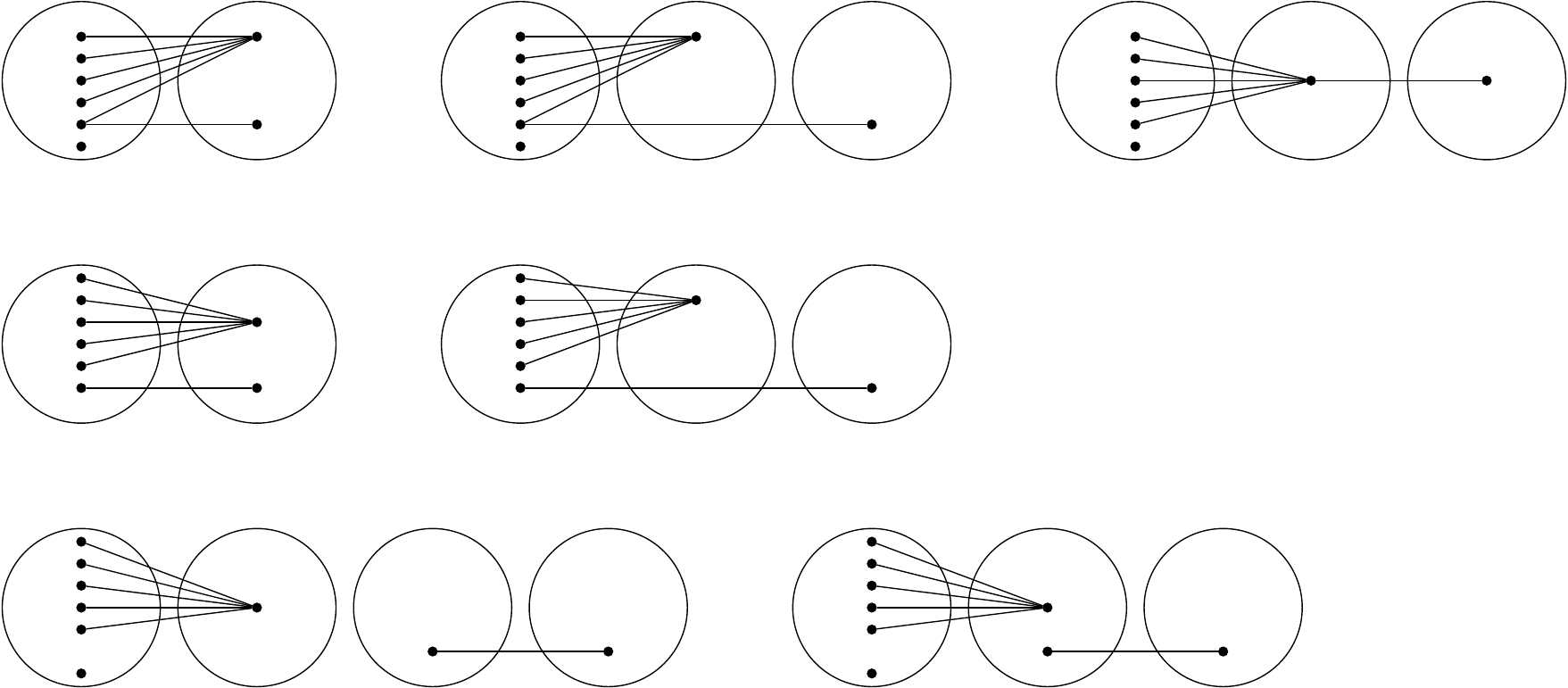}
        \caption{A mutual position of the garland $K(5, 1)$ and the edge $e$}\label{fig:K51}
    \end{figure}

    An interesting four-edge garland can be one of the following types:
    $K(4, 1)$, $K(2, 1)\disjoint K(2, 1)$, $K(3, 1)\disjoint K(1, 1)$ and $K(2, 2)$,
    such as garlands of type \\
    $K(1, 1, 1)\disjoint K(1,1)$, $K(2, 1)\disjoint K(1, 1)\disjoint K(1, 1)$ and $K(1, 1)\disjoint K(1, 1)\disjoint K(1, 1) \disjoint K(1, 1)$
    can not destroy necessary number of parts to be an interesting garland.

    Let there is no garlands of type $K(5, 1)$ and $K(6, 1)$ in $\gen{E}$.
    Assume that there is an interesting garland of type $K(4, 1)$,
    there can not be more than one of them. Denote the part,
    which contains all vertices of degree 1 of the garland $K(4, 1)$, as $V_1$.
    Notice, that if there is a garland of type $K(3, 1)$ which does not
    lay into any garland of type $K(4, 1)$,
    then it located as shown in fig.~\ref{fig:K41}
    (because this two garlands can not have common edges).
    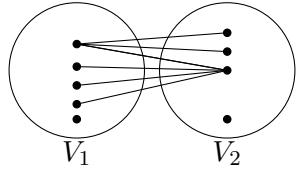
\begin{figure}[h]
\[
\begin{tikzpicture}
    \tikzstyle{vertex}=[circle, draw, fill=black, inner sep=1pt]
    \draw (0, -0.45) node[vertex](x11){} {};
    \draw (0, -0.2) node[vertex](x12) {};
    \draw (0, 0.05) node[vertex](x13) {};
    \draw (0, 0.35) node[vertex](x14) {};

    \draw (2, 0) node[vertex](y11) {};
    \draw (2, 0.25) node[vertex](y12) {};
    \draw (2, 0.5) node[vertex](y13) {};
    \draw (0, 0) circle[radius=0.9] {};
    \draw node at (0, -1.1) {$V_1$};
    \draw (2, 0) circle [radius=0.9];
    \draw node at (2, -1.1) {$V_2$};
%    \node [circle,draw,label={[red,anchor=north]above:X}]
    \foreach \x in {x11, x12, x13, x14} {
        \draw[black, solid] (\x) -- (y11);
    }

    \foreach \y in {y11, y12, y13} {
        \draw[black, solid] (\y) -- (x14);
    }

    \draw (0, -0.65) node[vertex] {};
    \draw (2, -0.65) node[vertex] {};

\end{tikzpicture}
\]
        \caption{Garlands $K(4, 1)$ and $K(3, 1)$}\label{fig:K41}
    \end{figure}

    In this case, a number of interesting garlands is equal to $2^4 - 1 + 2^3 - 1 = 15 + 7 = 22$.

    Consider the case, when every garland of type $K(3, 1)$ lay into some garland of type $K(4, 1)$.

    Assume, that there is an interesting garland $G'$ of type $K(3, 1)\disjoint K(1, 1)$.
    Notice, that the garland $G'$ should destroy some part.
    It can destroy part which contains only 1, 2, 3, or 4 vertices.
    By lemma statement, each part contains at last four vertices,
    so it should destroy four-vertex part and this part should contain all three vertices of degree
    one of the graph $K(3, 1)$ of the garland $G$,
    but this vertices lay into a part which contains more than 4 vertices, that is a contradiction.
    Consequently, there is no interesting garland of type $K(3, 1)\disjoint K(1, 1)$.

    Notice, that any interesting garland of type $K(2, 1)\disjoint K(2, 1)$ should destroy some part.
    It can destroy a part which contains at most four vertices.
    By lemma statement, any active part contains at least four vertices.
    If a part is destroyed by a garland of type $K(2, 1)\disjoint K(2, 1)$,
    then all four vertices of degree one should lay in this part, but this vertices
    lay in the part $V_1$, which contains at least 5 vertices, and this is a contradiction.

    Also notice, that there are no more than one interesting garland of type $K(2, 2)$,
    because they should have two edges from garland of type $K(4, 1)$.
    An interesting three-edge garland should be triangle or have type $K(3, 1)$.
    There are exactly four garlands of type $K(3, 1)$.
    There is no more than one triangle, because there are only two edges outside of $K(4, 1)$.
    Estimate a number of interesting two-edge garlands.
    The number of them, which lay inside garland of type $K(4, 1)$, is exactly $\binom{4}{2} = 6$.
    There are no more than three two-edge garlands, which contain edges not from $K(4, 1)$:
    there is no more than one garland, whose edges lay outside of $K(4, 1)$
    and there are no more than two garlands, which contains exactly one edge which does not lay in $K(4, 1)$.
    So, there are no more than $6 + 9 + (1 + 4) + 2 + 5 + 1 = 28$ interesting garlands.

    Assume, that there is no garlands of type $K(N, 1)$ for all $N\geq 4$.
    Then there is no more than three garlands of types $K(2, 2)$ and $K(2, 1)\disjoint K(2, 1)$.
    There is no more than two garlands of type $K(3, 1)$, because they can not have more than one common edge.
    Let $k$ be the greatest number of entries of garland of type $K(3, 1)$ in garlands of type $K(3, 1)\disjoint K(1,1)$.
    Then there are at least $3k$ pairs nonadjacent edges.
    Therefore, by lemma~\ref{EdgePairsLemma} one can deduce that $\mu_2 + \xi_3 \leq \binom{6}{2} - 3k = 15 - 3k$.
    Consequently,
    there are exactly 6 one-edge interesting garlands,
    there are exactly $\mu_2$ two-edge interesting garlands,
    there are no more than $\xi_3 + 3$ three-edge interesting garlands,
    there are no more than $2k + 3$ four-edge interesting garlands,
    there are no more than $\binom{6}{5} = 6$ five-edge interesting garlands,
    there are no more than $\binom{6}{6} = 1$ six-edge interesting garlands,
    so, there are no more than $6 + \mu_2 + \xi_3 + 3 + 2k + 2 + 6 + 1 \leq 18 + 15 - 3k + 2k \leq 33$ interesting garlands.
\end{proof}

The following lemma was proved in \cite{Gein}.

\begin{lemma}\label{OneStepLemma}
    Let $u=(u_1, \ldots,u_i,\ldots,u_j,\ldots u_t)\rightarrow v=(\ldots, u_i - 1, \ldots u_j + 1,\ldots)$
    be an elementary transformation of partition $u$ and element $u_t \geq 2$.
    Then graphs $K(u)$ and $H$ are not chromatically equivalent.
\end{lemma}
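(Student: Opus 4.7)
The plan is to argue by contradiction, following the scheme laid out in the Preliminaries: suppose $H$ is obtained from $K(v)$ by deleting a nonempty edge set $E$ and is chromatically equivalent to $K(u)$. The key is to exploit two chromatic invariants — the edge count and $pt$ — that can be computed explicitly from $u$ and $v$, and then to invoke Lemma~\ref{lemma:PT_Main}.

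First I pin down $|E|$ using the invariance of the edge count. Since $u$ and $v$ agree outside coordinates $i$ and $j$, a direct calculation gives
\[
|E(K(v))| - |E(K(u))| = \binom{u_i}{2} + \binom{u_j}{2} - \binom{u_i-1}{2} - \binom{u_j+1}{2} = u_i - u_j - 1 = \delta - 1,
\]
so $|E| = \delta - 1 \geq 1$. Next, using the formula $pt(K(w)) = \sum_k 2^{w_k - 1} - t$ and the same observation,
\[
\Delta pt(H, K(v)) = pt(K(u)) - pt(K(v)) = 2^{u_i - 1} + 2^{u_j - 1} - 2^{u_i - 2} - 2^{u_j} = 2^{u_j - 1}\bigl(2^{\delta - 1} - 1\bigr).
\]

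Now I apply the upper bound from Lemma~\ref{lemma:PT_Main}: $\Delta pt(H, K(v)) \leq 2^{|E|-1} = 2^{\delta - 2}$. Substituting, this demands $2^{u_j - 1}(2^{\delta - 1} - 1) \leq 2^{\delta - 2}$. Since $j \leq t$ and the partition is non-increasing, the hypothesis $u_t \geq 2$ forces $u_j \geq 2$, and hence the left-hand side is at least $2(2^{\delta - 1} - 1) = 2^{\delta} - 2$, which strictly exceeds $2^{\delta - 2}$ for every $\delta \geq 2$. This contradiction completes the argument.

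The argument is remarkably short once both invariants are on the table; the main thing to watch is the implication $u_j \geq 2$, which combines the monotonicity of partitions with the hypothesis $u_t \geq 2$. The hypothesis is genuinely needed: if $u_j = 1$ and $\delta = 2$, then $\Delta pt = 1 = 2^{|E|-1}$ and Lemma~\ref{lemma:PT_Main} no longer yields a contradiction, so a finer case analysis — the kind carried out elsewhere in the paper — would be required.
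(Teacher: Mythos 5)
Your proof is correct: the invariance of the edge count gives $|E| = \delta - 1$, the invariance of $pt$ gives $\Delta pt(H, K(v)) = 2^{u_j-1}\left(2^{\delta-1}-1\right)$, and with $u_j \geq u_t \geq 2$ this violates the upper bound $2^{|E|-1} = 2^{\delta-2}$ of Lemma~\ref{lemma:PT_Main}. The paper itself does not reproduce a proof of this lemma (it only cites \cite{Gein}), but your argument is exactly the standard invariant-plus-Lemma~\ref{lemma:PT_Main} route that the cited source follows, and your closing remark correctly identifies why the hypothesis $u_t \geq 2$ is indispensable (e.g.\ $K(3,1)$ and $K(2,2)$ minus an edge are both trees, hence chromatically equivalent).
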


\section{Case $r=0$}
The lowest levels of the lattice $NPL(n, 3)$ in the case when $n$ is divided by 3 is shown on fig.~\ref{fig:NPL0}.
By analogy with \cite{KorolevaR0},
the difference of the number of edges is placed over cover relation, and the difference of the invariant $pt$
is placed under cover relation.
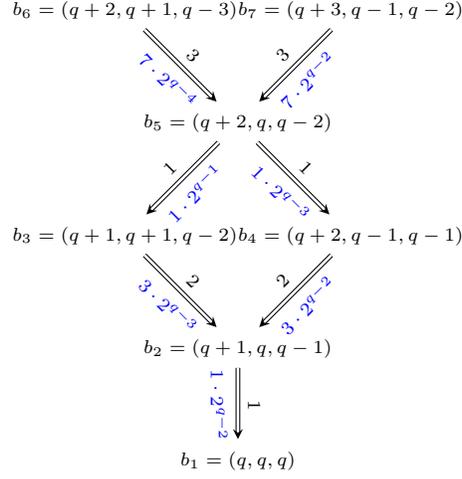
\begin{figure}
\[
\begin{tikzpicture}
    \tikzstyle{every node}=[font=\tiny]
    \tikzstyle{arr}=[thin,double,double distance=1pt,->, >=stealth,shorten >=1pt]
    \tikzstyle{edgediff}=[sloped,draw=none,midway,above=0.000000pt,text=black]
    \tikzstyle{ptdiff}=[sloped,draw=none,midway,below=0.000000pt,text=blue]
    \draw (9.00, 0.00) node(b_0_0) {$b_{1}=(q,q,q)$};
    \draw (9.00, 1.50) node(b_1_0) {$b_{2}=(q+1,q,q-1)$};
    \draw (7.50, 3.00) node(b_2_0) {$b_{3}=(q+1,q+1,q-2)$};
    \draw (10.50, 3.00) node(b_2_1) {$b_{4}=(q+2,q-1,q-1)$};
    \draw (9.00, 4.50) node(b_3_0) {$b_{5}=(q+2,q,q-2)$};
    \draw (7.50, 6.00) node(b_4_0) {$b_{6}=(q+2,q+1,q-3)$};
    \draw (10.50, 6.00) node(b_4_1) {$b_{7}=(q+3,q-1,q-2)$};
    \draw[arr](b_4_0)--(b_3_0)node[edgediff]{$3$}node[ptdiff]{$7\cdot2^{q-4}$};
    \draw[arr](b_4_1)--(b_3_0)node[edgediff]{$3$}node[ptdiff]{$7\cdot2^{q-2}$};
    \draw[arr](b_3_0)--(b_2_0)node[edgediff]{$1$}node[ptdiff]{$1\cdot2^{q-1}$};
    \draw[arr](b_3_0)--(b_2_1)node[edgediff]{$1$}node[ptdiff]{$1\cdot2^{q-3}$};
    \draw[arr](b_2_0)--(b_1_0)node[edgediff]{$2$}node[ptdiff]{$3\cdot2^{q-3}$};
    \draw[arr](b_2_1)--(b_1_0)node[edgediff]{$2$}node[ptdiff]{$3\cdot2^{q-2}$};
    \draw[arr](b_1_0)--(b_0_0)node[edgediff]{$1$}node[ptdiff]{$1\cdot2^{q-2}$};
\end{tikzpicture}
\]
    \caption{The lowest levels of the lattice $NPL(n, 3)$ in case when $n$ is divided by 3}\label{fig:NPL0}
\end{figure}
There are only two elements of height 4: $(q + 2, q+1, q-3)$ and $(q + 3, q-1, q-2)$.
Chromatic uniqueness of a graph $K(q+2, q+1, q-3)$ if $q\geq 5$ follows from the main result of \cite{Gein}.

\begin{proposition}
    A graph $K(q+3,q-1,q-2)$ is chromatically unique, if $q\geq 4$.
\end{proposition}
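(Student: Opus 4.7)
Assume for contradiction that there exists a graph $H$ chromatically equivalent to $K(u) = K(q+3, q-1, q-2)$ but non-isomorphic to it; as explained in the preliminaries, $H$ is obtained from some $K(v)$ with $v$ a length-three partition of $n = 3q$ by deleting an edge set $E$. Because both $e(K(v))$ and $pt(K(v))$ are Schur-convex functions of $v$, a direct enumeration of length-three partitions of $3q$ shows that the only candidates $v\neq u$ with $e(K(v))\geq e(K(u))$ are the partitions $b_1,b_2,b_3,b_4,b_5$ of figure~\ref{fig:NPL0} (each satisfying $v\lhd u$) together with $b_6 = (q+2,q+1,q-3)$, which is incomparable with $u$ but satisfies $e(K(b_6))=e(K(u))$.

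The plan is to eliminate each candidate in turn. For $v=b_6$, the equality $e(K(v))=e(K(u))$ forces $|E|=0$, so $H = K(b_6)\neq K(u)$, contradicting the fact that distinct complete multipartite graphs are not chromatically equivalent. For $v\in\{b_4,b_5\}$, $v$ is obtained from $u$ by a single elementary transformation and $u_3=q-2\geq 2$, so lemma~\ref{OneStepLemma} applies. For the remaining $v$ I would read $|E|$ and $\Delta pt(K(u),K(v))$ off the lattice and feed them into lemma~\ref{lemma:PT_Main}: $v=b_3$ gives $|E|=4$ and $\Delta pt = 9\cdot 2^{q-2}\geq 36 > 8 = 2^{|E|-1}$; $v=b_2$ gives $|E|=6$ and $\Delta pt = 21\cdot 2^{q-3}\geq 42 > 32 = 2^{|E|-1}$; $v=b_1$ gives $|E|=7$ and $\Delta pt = 23\cdot 2^{q-3}$, which already exceeds $2^6=64$ as soon as $q\geq 5$.

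The sole configuration not immediately eliminated is $q=4$, $v=b_1=(4,4,4)$, $|E|=7$, $\Delta pt=46$, where lemma~\ref{lemma:PT_Main} (which allows up to $64$) is not sharp enough and one must analyse $\gen{E}$ directly. The triangle invariant gives $\xi_1-\xi_2-2\xi_3 = I_3(K(v))-I_3(K(u)) = 64-42 = 22$, and since every edge of $K(4,4,4)$ lies in exactly four triangles we have $\xi_1=28$, hence $\xi_2+2\xi_3 = 6$. I would then split according to $\xi_3\in\{0,1,2,3\}$ and to the presence or absence of interesting garlands of type $K(5,1)$, $K(4,1)$, or $K(2,1,1)$, combining lemmas~\ref{TriagLemma},~\ref{Xi2Lemma},~\ref{K211Lemma},~\ref{SixEdgesLemma} and corollary~\ref{Xi2Cor} (the last one applied to $E$ after removing one well-chosen edge) to bound the number of interesting garlands in each subcase.

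The hard part will be precisely this last step. None of the preliminary lemmas on its own delivers a bound better than $46$ when $|E|=7$, so the argument must combine the rigid arithmetic constraint $\xi_2+2\xi_3=6$ with the geometric fact that each part of $K(4,4,4)$ has only four vertices — which severely restricts the admissible $E$-subgraphs (for instance, no coordinated $K(m,1)$ with $m\geq 5$ and no interesting garland destroying a part via $K(2,1)\disjoint K(2,1)$ or $K(3,1)\disjoint K(1,1)$ can appear). A careful finite enumeration of the isomorphism types of $\gen{E}$ meeting all these constraints should show that the count of interesting garlands is always strictly less than $46$, producing the required contradiction and completing the proposition.
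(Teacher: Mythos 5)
Your reduction to the candidate partitions, the treatment of $b_6$ (where $|E|=0$), the use of lemma~\ref{OneStepLemma} for $b_4,b_5$, and the elimination of $v=(q+1,q+1,q-2)$ all agree with the paper. The genuine gap is that you take the upper bound in lemma~\ref{lemma:PT_Main} literally as $2^{|E|-1}$. The bound that is actually available --- and the one the paper itself applies in every case ($15$, $63$, $127$, and $31$, $63$ in the $r=1$ section) --- is $2^{|E|}-1$, the number of nonempty subsets of $E$: an interesting garland is determined by its edge aggregate, and nothing halves this count in general; indeed lemma~\ref{SixEdgesLemma} exhibits $63=2^{6}-1$ interesting garlands when $\gen{E}$ is a coordinated star $K(6,1)$ with $|E|=6$, so the exponent in the printed statement of lemma~\ref{lemma:PT_Main} is a misprint. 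With the correct bound your one-line eliminations collapse: for $v=(q+1,q,q-1)$, $|E|=6$, the inequality $42\cdot 2^{q-4}\leq 63$ still leaves $q=4$, $\Delta pt(H,K(v))=42$ open, and for $v=(q,q,q)$, $|E|=7$, the inequality $46\cdot 2^{q-4}\leq 127$ leaves both $q=4$ (with $\Delta pt=46$) and $q=5$ (with $\Delta pt=92$) open. These surviving configurations are precisely where the paper's work lies: for $v=(5,4,3)$ it combines $\Delta I_3=18$, the relation $e_{13}+2e_{23}=\xi_2+2\xi_3$, lemma~\ref{SixEdgesLemma}, lemma~\ref{TriagLemma} and corollary~\ref{Xi2Cor} to force $\xi_2=1$ and then push the garland count to at most $33<42$; for $v=(5,5,5)$ it uses $\xi_2+2\xi_3=6$ to force $\xi_3>0$ or $\xi_2=6$ and bounds the count by $79<92$.

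Even for the one case you do retain, $q=4$, $v=(4,4,4)$, what you give is a programme rather than a proof, and some of its guiding assertions are false: with parts of exactly four vertices, interesting garlands of types $K(2,1)\disjoint K(2,1)$ and $K(3,1)\disjoint K(1,1)$ \emph{can} destroy a part and must be counted (the paper bounds them, via lemma~\ref{EdgePairsLemma} and the quantity $k$, rather than excluding them), and the subcase $\xi_2=0$, $\xi_3=3$ needs the separate count of subsets continuable outside a $K(2,1,1)$-garland (in the spirit of lemma~\ref{K211Lemma}) to reach $45<46$. So the decisive counting arguments, which constitute the substance of the paper's proof of this proposition, are missing from your proposal.
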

\begin{proof}
    Let graphs $K(q+3, q-1, q-2) = K(u)$ and $H$ are chromatically equivalent
    and the graph $H$ is obtained from graph $K(v)$ by deleting the edge set $E$.
    Consider the cases for the partition $v$.

    Cases when $v=(q+2, q, q-2)$ and $v=(q+2, q-1, q-1)$ contradict with lemma~\ref{OneStepLemma}.

    \textbf{Case 1.} Let $v=(q+1,q+1,q-2)$. Then  $|E|=4$ and, by lemma~\ref{lemma:PT_Main}, one can obtain
            $$ \Delta pt(H, K(v)) = 7\cdot 2^{q-2} + 2^{q-1} = 9 \cdot 2^{q-2} \leq 15, $$
            which implies that $q\leq 2$, which is a contradiction.

    \textbf{Case 2.} Let $v=(q+1,q, q-1)$. Then  $|E|=6$ and using lemma \ref{lemma:PT_Main} one can deduce,
    that
            $$
            \Delta pt(H, K(v)) = 9\cdot 2^{q-2} + 3\cdot 2^{q-3} = 36\cdot 2^{q-4} + 6\cdot 2^{q-4} = 42\cdot 2^{q-4} \leq 63,
            $$
            which implies that $q=4$, $\Delta pt(H, K(v)) = 42$ and $v = (5, 4, 3)$.
            Compute the difference of the invariant $I_3$:
            \begin{gather*}
                \Delta I_3(K(v),K(u)) = 3(q-2) + q + 2 + 2(q-1) = 6q - 6  = 18, \\
                \Delta I_3(K(v), H) = \xi_1 - \xi_2 - 2\xi_3 = 18, \\
                \xi_1 = 3e_{12} + 4e_{13} + 5e_{23} = 3|E| + e_{13} + 2e_{23} = 18 + e_{13} + 2e_{23}, \\
                e_{13} + 2e_{23} = \xi_2 + 2\xi_3
            \end{gather*}

            Let $e_{12} = 6$. Then $\gen{E}$ is a subgraph of the complete bipartite graph of type $K(5, 4)$.
            It should contain exactly 42 interesting garlands, and this contradicts with
            lemma~\ref{SixEdgesLemma}.
            Consequently, $\xi_2 + 2\xi_3 > 0$.

            Let $\xi_3 > 0$. Then by lemma~\ref{TriagLemma} the number of interesting garland does not exceed $5\cdot 8 - 1 = 39 < 42$,
            which is a contradiction.
            Consequently, $\xi_3 = 0$.

            Let $\xi_2\geq 2$. Then by corollary~\ref{Xi2Cor} the number of interesting garlands does not exceed
            $63 - 32 + 2^{6 - 3} = 39 < 42$, which is impossible.

            Therefore, $\xi_2 = 1$, and one can obtain that  $e_{13} + 2e_{23} = 1$,
            consequently, $e_{23} = 0, e_{13} = 1, e_{12} = 5$.
            Denote a single edge between parts $V_1$ and $V_3$ as $e$.
            Because of $\xi_2 = 1$, there is exactly one edge in  $\gen{E}$, which is adjacent with $e$.
            Denote this edge as $f$.
            Then there are no more $2^{|E \setminus\{e\}|} - 1 = 2^5 - 1 = 31$ garlands,
            whose edges lay in $E\setminus \{e\}$.
            Notice, that any non-one-edge garland, which contains the edge $e$, can not have cardinality one,
            so should destroy part $V_1$ and
            notice, that this garland can not contain the edge $f$, therefore,
            there are no more than one such garlands. Consequently,
            there are no more than $31 + 1 + 1 = 33 < 42$ garlands, which is a contradiction.

    \textbf{Case 3.} Let $v=(q, q, q)$. Then $|E| = 7$ and by lemma~\ref{lemma:PT_Main} one can deduce that
            $$
            \Delta pt(H, K(v)) = 42\cdot 2^{q-4} + 2^{q-2} = 46\cdot 2^{q-4} \leq 127,
            $$
            so $q=4$ or $q=5$. Compute the difference of the invariant $I_3$:
            \begin{gather*}
                \Delta I_3(v,u) = 6q-6 + q = 7q - 6 \\
                \Delta I_3(v,H) = \xi_1 - \xi_2 - 2\xi_3 = 7q-6\\
                \xi_1 = qe_{12} + qe_{13} + qe_{23} = q|E| = 7q \\
                6 = \xi_2 + 2\xi_3,
            \end{gather*}
            therefore,  $\xi_3 > 0$ or $\xi_2 = 6$.

            Assume, that $q=5$. In this case $\Delta pt(H, K(v)) = 92$. If $\xi_3 > 0$, then by lemma~\ref{TriagLemma}
            a number of interesting garlands does not exceed $5\cdot 16 - 1 = 79$.
            If $\xi_2 = 6$, then by corollary~\ref{Xi2Cor} the number of interesting garlands does not exceed
            $2^7 - 1 - 2\cdot 2^5 + 2^{7 - 3} = 127 - 64 + 16 = 79$, which is contradiction.

            Now consider the case when $q=4$. In this case $v=(4, 4, 4)$ and $\Delta pt(H, K(v)) = 46$.

            Assume that $\xi_2 = 0$. Then $\xi_3 = 3$, which implies that there are two distinct triangles in $\gen{E}$, which have a common edge.
            Denote set of all edges of this triangles as $E'$, and notice, that $|E'| = 5$.
            Since the graph $K(v)$ is tripartite, set $E'$ is an edge aggregate of the garland of type $K(2, 1, 1)$.
            Consider an arbitrary continuable outside of $'E'$ subset $E_1\subset E'$ beside a couple of nonadjacent edges
            (see proof of lemma~\ref{K211Lemma}), there are 11 such subsets.
            The number of garlands, such that an intersection of their edge aggregates with set of edges of garland $K(2, 1, 1)$ is equal to $E_1$, is not
            greater than $2^2 = 4$.
            It is left to estimate the number of such garlands $G'$,
            that an intersection of their edge aggregate with set of edges of garland $K(2, 1, 1)$
            is equal to a pair of nonadjacent edges.
            Notice, that in this case the cardinality of the garland $G'$ is not less than 2,
            since if it is equal to 1, then it should contain
            another edge from $K(2, 1, 1)$.
            Consequently, the garland $G'$ should destroy some part, therefore,
            it should contain as least 4 edges and
            two of them does not lay in $K(2, 1, 1)$, so there are no more than 2 such garlands,
            because there are only two pairs of nonadjacent edges in $K(2, 1, 1)$.
            So, there are no more than $11\cdot 4 - 1 + 2 = 45 < 46$, which is a contradiction.
            Consequently, $\xi_2 > 0$. Since $\xi_2 = 6 - 2\xi_3$ is an even number,
            one can obtain that $\xi_2 \geq 2$.

            Notice, that there are no garlands in $\gen{E}$, whose edge aggregates contains exactly 7 edges,
            because such garlands should contain an uncontinuable subset ---
            a set of edges of some $\Xi_2$-subgraph.
            Also notice, that there are no more than 1 six-edge garlands
            (because edges $f$ and $e$ form a $\Xi_2$-subgraph,
            so a six-edge garland should contain exactly one of them,
            because all edges can not simultaneously lay in the same garland.
            If there are two nonintersecting $\Xi_2$-subgraphs,
            then there are no six-edge garlands; if $f$ -- is a common edge of two distinct
            $\Xi_2$-subgraph, then it can not lay in any six-edge garland).
            There are no more than $\binom{7}{5} - \binom{5}{3} = 21 - 10 = 11$ five-edge
            garland, since there are $\binom{5}{3}$ five-element subsets, which contain edges of a certain $\Xi_2$-subgraph.

            An interesting three-edge garland should be triangle or has type $K(3, 1)$.

            Estimate the number of garlands of type $K(3, 1)$.
            Since there are no garland of type $K(4, 1)$, because such garland destroys a part,
            which is impossible, any two garlands of type $K(3, 1)$ have no more than one common edge; therefore, there are no more than three such
            garlands.

            An interesting four-edge garland should have one of the three types:
            $K(3,1)\disjoint K(1,1),\\ K(2,1)\disjoint K(2,1),\, K(2, 2)$.
            There are no more than two garlands of type  $K(2, 2)$
            and there are no more than three garlands of type ~$K(2, 1)\disjoint K(2, 1)$.

            Each garland of type $K(3, 1)\disjoint K(1, 1)$ contain inside itself a garland of type $K(3, 1)$.
            Let $k$ be the greatest number of entries of garlands $K(3, 1)$ in interesting garlands of type $K(3,1)\disjoint K(1,1)$.
            Then there are at least $3k$ pairs of nonadjacent edges in $\gen{E}$, so by lemma~\ref{EdgePairsLemma}, one can deduce that
            $\xi_2 + \mu_2 + 3\xi_3 \leq \binom{7}{2} - 3k = 21 - 3k$,
            therefore, $\mu_2 + \xi_3 \leq 15 - 3k$.
            Then there are no more than $3k$ garlands of type $K(3, 1)\disjoint K(1, 1)$.
            Then there are no more than $7 + \mu_2 + (3 + \xi_3) + (3k + 5) + 11 + 1 \leq 27 + 15 - 3k + 3k  = 42 < 46$ interesting garlands,
            which is a contradiction.
\end{proof}

\section{Case $r=1$}

The lowest levels of the lattice $NPL(n, 3)$, when $n$ is equal 1 modulo 3, are shown on fig.~\ref{fig:NPL1}.
As in the previous case, the difference of the number of edges is placed over cover relation, and
the difference of the invariant $pt$ is placed under cover relation.
\begin{figure}
\[
\begin{tikzpicture}
     \tikzstyle{every node}=[font=\tiny]
     \tikzstyle{arr}=[thin,double,double distance=1pt,->, >=stealth,shorten >=1pt]
     \tikzstyle{edgediff}=[sloped,draw=none,midway,above=0.000000pt,text=black]
     \tikzstyle{ptdiff}=[sloped,draw=none,midway,below=0.000000pt,text=blue]
     %\tikzstyle{triagdiff}=[sloped,draw=none,near end,above=0.000000pt,text=red]
     \draw (9.00, 0.00) node(b_0_0) {$b_{1}=(q+1,q,q)$};
     \draw (7.50, 1.50) node(b_1_0) {$b_{2}=(q+1,q+1,q-1)$};
     \draw (9.00, 3.00) node(b_2_0) {$b_{3}=(q+2,q,q-1)$};
     \draw (7.50, 4.50) node(b_3_0) {$b_{4}=(q+2,q+1,q-2)$};
     \draw (10.50, 4.50) node(b_3_1) {$b_{5}=(q+3,q-1,q-1)$};
     \draw (6.00, 6.00) node(b_4_0) {$b_{6}=(q+2,q+2,q-3)$};
     \draw (9.00, 6.00) node(b_4_1) {$b_{7}=(q+3,q,q-2)$};
     %\draw (7.50, 7.50) node(b_5_0) {$b_{8}=(q+3,q+1,q-3)$};
     %\draw (10.50, 7.50) node(b_5_1) {$b_{9}=(q+4,q-1,q-2)$};
     %\draw[arr](b_5_0)--(b_4_0)node[edgediff]{$1$}node[ptdiff]{$1\cdot2^{q}$}   ;
     %\draw[arr](b_5_0)--(b_4_1)node[edgediff]{$3$}node[ptdiff]{$7\cdot2^{q-4}$} ;
     %\draw[arr](b_5_1)--(b_4_1)node[edgediff]{$4$}node[ptdiff]{$15\cdot2^{q-2}$};
     \draw[arr](b_4_0)--(b_3_0)node[edgediff]{$4$}node[ptdiff]{$15\cdot2^{q-4}$};
     \draw[arr](b_4_1)--(b_3_0)node[edgediff]{$2$}node[ptdiff]{$3\cdot2^{q-1}$} ;
     \draw[arr](b_4_1)--(b_3_1)node[edgediff]{$1$}node[ptdiff]{$1\cdot2^{q-3}$} ;
     \draw[arr](b_3_0)--(b_2_0)node[edgediff]{$2$}node[ptdiff]{$3\cdot2^{q-3}$} ;
     \draw[arr](b_3_1)--(b_2_0)node[edgediff]{$3$}node[ptdiff]{$7\cdot2^{q-2}$} ;
     \draw[arr](b_2_0)--(b_1_0)node[edgediff]{$1$}node[ptdiff]{$1\cdot2^{q-1}$} ;
     \draw[arr](b_1_0)--(b_0_0)node[edgediff]{$1$}node[ptdiff]{$1\cdot2^{q-2}$} ;
\end{tikzpicture}
\]
\caption{The lowest levels of the lattice $NPL(n, 3)$ in case, when $n$ is equal 1 modulo 3}\label{fig:NPL1}
\end{figure}
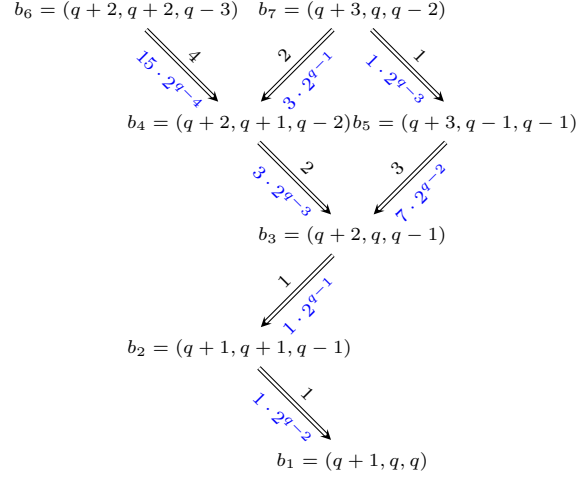
There are only two elements of height 4: $(q+2, q+2, q-3)$ and $(q+3, q, q-2)$.
Chromatic uniqueness of the graph $K(q + 2, q+2, q-3)$ if $q\geq 5$ follows from theorem 1 in \cite{Chia}.

\begin{proposition}
    A graph $K(q +3, q, q-2)$ is chromatically unique if $q\geq 4$.
\end{proposition}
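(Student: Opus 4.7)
The plan is to argue by contradiction: assume $K(u) = K(q+3, q, q-2)$ is chromatically equivalent to some $H = K(v) \setminus E$ with $E \neq \emptyset$. Then $K(v)$ must have at least as many edges as $K(u)$, so $v$ lies strictly below $u = b_7$ in the dominance order; from figure~\ref{fig:NPL1} the only candidates are $v \in \{b_1, b_2, b_3, b_4, b_5\}$. I would dispose of these in order of increasing distance from $u$.

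First, $b_5 = (q+3, q-1, q-1)$, $b_4 = (q+2, q+1, q-2)$ and $b_3 = (q+2, q, q-1)$ are each obtained from $u$ by one elementary transformation, with index pairs $(i, j) = (2, 3)$, $(1, 2)$ and $(1, 3)$ respectively; the required inequalities $u_i - u_j \geq 2$, $u_i - 1 \geq u_{i+1}$ and $u_{j-1} \geq u_j + 1$ are immediate from $u = (q+3, q, q-2)$. Since $u_t = q - 2 \geq 2$ when $q \geq 4$, Lemma~\ref{OneStepLemma} rules out all three cases simultaneously. For $v = b_2 = (q+1, q+1, q-1)$ the figure gives $|E| = 5$, and summing pt-differences along any cover path from $b_7$ to $b_2$ yields $\Delta pt(H, K(v)) = 19 \cdot 2^{q-3}$; since $2^{|E|} - 1 = 31$ while $19 \cdot 2^{q-3} \geq 38$ for $q \geq 4$, Lemma~\ref{lemma:PT_Main} forces a contradiction.

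The main obstacle is the remaining case, $v = b_1 = (q+1, q, q)$, where $|E| = 6$ and $\Delta pt(H, K(v)) = 21 \cdot 2^{q-3}$. Lemma~\ref{lemma:PT_Main} now forces $21 \cdot 2^{q-3} \leq 63$, which restricts us to $q = 4$ and leaves $v = (5, 4, 4)$ with $\Delta pt = 42$. My plan is then to invoke Lemma~\ref{SixEdgesLemma}: every part of $K(v)$ has size at least four, so the lemma applies and offers the dichotomy that either $\gen{E}$ is itself an interesting garland of type $K(6, 1)$ (contributing exactly $63$ interesting garlands), or there are at most $33$ interesting garlands altogether. The first alternative cannot occur because a coordinated $K(6, 1)$-subgraph requires six pendant vertices in a single part of $K(5, 4, 4)$, yet no part has size $\geq 6$; the second alternative directly contradicts $\Delta pt = 42$. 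Both branches being excluded, no such $v$ exists and the proposition follows.
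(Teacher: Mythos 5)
Your proposal is correct and follows essentially the same route as the paper: Lemma~\ref{OneStepLemma} disposes of $v=(q+2,q+1,q-2)$, $(q+3,q-1,q-1)$, $(q+2,q,q-1)$; Lemma~\ref{lemma:PT_Main} kills $v=(q+1,q+1,q-1)$ (your $19\cdot 2^{q-3}$ is the paper's $38\cdot 2^{q-4}$) and reduces $v=(q+1,q,q)$ to $q=4$, $\Delta pt(H,K(v))=42$, which Lemma~\ref{SixEdgesLemma} then contradicts. Your additional exclusion of the $K(6,1)$ branch via part sizes is valid but not needed, since $42\neq 63$ already rules it out.
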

\begin{proof}
    Assume graphs $K(q+3, q, q-2) = K(u)$ and $H$ are chromatically equivalent,
    and the graph $H$ is obtained from a graph $K(v)$ by deleting edge set $E$.
    Consider possible cases for partition $v$.

    Cases $v=(q+2, q+1, q-2), v=(q+3, q-1, q-1)$ and $v=(q+2, q, q -1)$ are contradict with lemma~\ref{OneStepLemma}.

    \textbf{Case 1.} Let $v=(q+1, q+1, q-1)$. Then $|E|=5$ and by lemma~\ref{lemma:PT_Main} one can obtain, that
            $$
            \Delta pt(H, K(v)) = 2^{q-3} + 7\cdot 2^{q-2} + 2^{q-1} = (2 + 28 + 8)\cdot 2^{q-4} = 38\cdot 2^{q-4} \leq 2^5 - 1,
            $$
            which is a contradiction since $q\geq 4$.

    \textbf{Case 2.} Let $v=(q+1, q, q)$, In this case $|E|=6$ and using lemma~\ref{lemma:PT_Main} one can deduce that
            $$
            \Delta pt(H, K(v)) = 38\cdot 2^{q-4} + 2^{q-2} = (38 + 4)2^{q-4} = 42\cdot 2^{q-4} \leq 2^6 - 1,
            $$
            which implies that $q=4$ and $\Delta pt(H, K(v)) = 42$, which is a contradiction with lemma~\ref{SixEdgesLemma}.
\end{proof}

\textbf{Acknowledgment.} The author is grateful to V.A. Baransky for attention and remarks, which are assist to significant improvement of this paper.

\bigskip

\end{document}